\numberwithin{equation}{section}
\newtheorem{theorem}{Theorem}
\newtheorem{remark}{Remark}
\newtheorem{lemma}{Lemma}
\newtheorem{corollary}{Corollary}
\begin{document}

\title[Quadrature-Free Polytopic DGFEMs for Transport Problems]{Quadrature-Free Polytopic Discontinuous Galerkin Methods for Transport Problems}

\author[T. J. Radley]{Thomas J. Radley}
\address[T. J. Radley]{School of Mathematical Sciences, University of Nottingham,
University Park, Nottingham NG7 2RD, UK}
\email{Thomas.Radley1@nottingham.ac.uk}
\author[P. Houston]{Paul Houston}
\address[P. Houston]{School of Mathematical Sciences, University of Nottingham,
University Park, Nottingham NG7 2RD, UK}
\email[Corresponding author]{Paul.Houston@nottingham.ac.uk}
\author[M. E. Hubbard]{Matthew E. Hubbard}
\address[M. E. Hubbard]{School of Mathematical Sciences, University of Nottingham,
University Park, Nottingham NG7 2RD, UK}
\email{Matthew.Hubbard@nottingham.ac.uk}

\begin{abstract}
In this article we consider the application of Euler's homogeneous function theorem together with Stokes' theorem to exactly integrate families of polynomial spaces over general polygonal and polyhedral (polytopic) domains in two- and three-dimensions, respectively. This approach allows for the integrals to be evaluated based on only computing the values of the integrand and its derivatives at the vertices of the polytopic domain, without the need to construct a sub-tessellation of the underlying domain of interest. Here, we present a detailed analysis of the computational complexity of the proposed algorithm and show that this depends on three key factors: the ambient dimension of the underlying polytopic domain; the size of the requested polynomial space to be integrated; and the size of a directed graph related to the polytopic domain. This general approach is then employed to compute the volume integrals arising within the discontinuous Galerkin finite element approximation of the linear transport equation. Numerical experiments are presented which highlight the efficiency of the proposed algorithm when compared to standard quadrature approaches defined on a sub-tessellation of the polytopic elements.

\textbf{Keywords:} Polytopic elements; numerical integration; discontinuous Galerkin methods.

\textbf{Mathematics Subject Classification (2020):} 65D30, 65Y20.
\end{abstract}

\maketitle

\section{Introduction}

Over the past 10-15 years there has been increasing widespread interest in the development of numerical methods for the approximation of partial differential equations based on employing general element shapes such as polygons in 2D and polyhedra in 3D (which we collectively refer to as polytopic elements), cf. \cite{Antonietti2016,doi:10.1142/S0218202512500492,cangiani2014hp,CaDoGeHo2017,hho_book_2021}, for example, and the references cited therein. Exploiting such flexible computational meshes is highly attractive for a number of key reasons: complicated geometries may be meshed by employing relatively coarse meshes, without the need to simplify the boundary or other features within the given domain; moving meshes and/or overlapping meshes arising, for example, in applications such as fluid structure interaction can easily be accommodated; hanging nodes are treated in a very simple and natural manner; finally, multi-level solvers, such as domain decomposition methods and multigrid can easily be implemented employing embedded polytopic meshes. 

However, a major bottleneck in the exploitation of general polytopic elements within, for example, finite element methods is the construction of suitable element quadratures needed for the assembly of the underlying matrix system. With this mind several approaches have been proposed within the literature. The simplest approach is to construct a sub-tessellation of a given polytope into standard element shapes, for example, simplices and/or tensor-product elements (quadrilaterals in 2D and hexahedra in 3D) and employ known quadrature rules on these sub-elements. We remark that when the underlying polytopic mesh is constructed from the agglomeration of a given fine mesh consisting of standard element shapes, then the sub-tessellation need not be computed, as it will already be provided. The major problem with this approach is that the number of quadrature points can be extremely large, depending on the cardinality of the sub-tessellation and the required accuracy. Of course, quadrature is naturally highly parallisable, and hence such an implementation can be accelerated, cf. ~\cite{doi:10.1137/20M1350984} for example, who employed a GPU approach. Alternatively, one may for example, use this initial quadrature and attempt to optimise it by successively removing points until a minimal number is attained; here, we mention the moment fitting approaches, cf. ~\cite{LyMo77,mousavi2010generalized,MoSu11,SuWa13,XiGi10}, for example, and the references cited therein.

In this article we pursue an alternative approach based on the integration of homogeneous functions. This idea was first developed in the articles \cite{chin2015numerical,lasserre1999integration}, cf. also \cite{chin2020efficient}. Here, the essential idea is to employ Euler's homogeneous function theorem, together with Stokes' theorem, which allows for the integral of a homogeneous function over a given polytope to be written as a boundary integral. Recursively applying this approach allows for the integral to be exactly evaluated based on only computing values of the integrand and its derivatives at the vertices of the polytopic domain, and hence leads to an exact cubature rule whose quadrature points are the vertices of the polytope. In our recent paper \cite{antonietti2018fast} we considered the application of this algorithm within the discontinuous Galerkin finite element (DGFEM) approximation of the Poisson problem. In this article, we extend this work by first presenting a detailed complexity analysis of both a quadrature based algorithm and the proposed quadrature-free approach. Here, the primary focus is to consider the number of flops required to integrate an entire space of polynomials up to a given fixed degree, which is typically required within a finite element implementation. In particular, we show that the computational complexity of the proposed algorithm depends on three key factors: the ambient dimension of the underlying polytopic domain; the size of the requested polynomial space to be integrated; and the size of a directed graph related to the polytopic domain. By a careful selection of the so-called local origins needed on each of the polytope's lower dimensional facets, which is needed in the specification of the quadrature-free algorithm, we are able to considerably optimise the number of required flops within this approach by reducing the size of the associated directed graph. To demonstrate the application of this optimised quadrature-free algorithm, we utilise this within a DGFEM approximation of the linear transport equation. We point out that the specific application we have in mind is the numerical approximation of the linear Boltzmann transport equation. This is a high-dimensional integro-differential equation employed within applications including neutron and radiation transport. In our recent article \cite{houston2023efficient}, we illustrate that by employing a judicious choice of local basis functions and quadratures within the angular and energy domains, the DGFEM approximation of the underlying problem may be computed by simply solving a sequence of linear transport equations in space corresponding to each angular quadrature point (which defines a constant advection direction) and each energy quadrature point. In typical 3D applications utilising either source iteration, or a source iteration preconditioner within a Krylov space solver, means that an extremely large number of these transport solves must be computed. With that in mind, efficiency is of paramount concern, and hence the need to develop optimised quadrature-free implementations of the linear transport equation. Numerical experiments presented here highlight the gains in efficiency with our proposed optimised quadrature-free algorithm in comparison with standard sub-tessellation based quadrature. In particular, we study the dependence of both algorithms on the shape of the underlying polytope.

The outline of this article is as follows. In Section~\ref{section:monomial_integration_quad_free} we recall the quadrature-free integration technique introduced in ~\cite{chin2015numerical} and its application to the integration of families of monomial functions over general $d$-dimensional polytopes as in ~\cite{antonietti2018fast}. The time and space (memory) complexities of this algorithm in the case where the integration domain is polygonal or polyhedral are analysed. In Section~\ref{sec:dgfem} we introduce the DGFEM approximation of linear first-order transport equation on general polytopic methods. In Section~\ref{sec:matrix_assembly} we compare the time complexities associated with the assembly of the local DGFEM element matrices via quadrature based and quadrature-free based approaches, where we exploit known decomposition of the integrands into a linear combination of monomials. Several numerical experiments are performed in Section~\ref{sec:numerics} in order to demonstrate the accelerated assembly time of quadrature-free based approach. Finally, in Section~\ref{sec:conclusions} we summarise the work undertaken in this article and discuss future extensions.

\section{Quadrature-free integration of monomials} \label{section:monomial_integration_quad_free}

In this section we review the procedure for the numerical integration of homogeneous functions over a polytopic domain as introduced by Lasserre \cite{lasserre1998integration,lasserre1999integration} for convex polytopes and extended by Chin et al. in \cite{chin2015numerical} to non-convex ones.

\noindent We consider the problem of evaluating integrals of the form $\int_\polytope f(\x) \d\x$, where:
\begin{itemize}
    \item $f:\polytope\rightarrow\reals$ denotes a \emph{(positively) homogeneous function} of degree $q\in\reals$; that is, for all $\lambda>0$ we have that $f(\lambda\x)=\lambda^qf(\x)$ for all $\x\in\polytope$. The function $f$ satisfies Euler's homogeneous function theorem \cite{simon1994mathematics}: if $f$ is a continuously differentiable positively homogeneous function of degree $q$, then we have
    \begin{equation*}
        q f(\x) = \x\cdot\nabla f(\x)
    \end{equation*}

    \noindent for all $\x$ in the domain of definition of $f$. 
    
    \item $\polytope\subset\reals^d$, $d=2, 3$, denotes a closed polytope whose boundary $\partial\polytope$ is defined by $m(\polytope)$ polytopic facets $\{\facet_i\}_{i=1}^{m(\polytope)}$ of dimension $(d-1)$. To each facet $\facet_i$, we associate the hyperplane $\mathcal{H}_i$ containing $\facet_i$ defined for any $\x_0\in\reals^d$ by
    \begin{equation*}
        \mathcal{H}_i = \{ \x\in\reals^d : (\x-\x_0)\cdot\mathbf{n}_i = a_i \}
    \end{equation*}

    \noindent for some $a_i\in\reals$ and some vector $\mathbf{n}_i\in\reals^d$ of unit length. We note that $\mathbf{n}_i$ may be chosen to be the unit outward normal to $\polytope$ on $\facet_i$ and that $a_i$ denotes the (signed) distance between $\mathcal{H}_i$ and $\x_0$.
\end{itemize}

We also recall the generalised Stokes' theorem \cite{taylor1996partial}: for a continuously differentiable vector field $\mathbf{X}:\polytope\rightarrow\reals^d$ defined on a neighbourhood of $\polytope$, we have
\begin{equation*}
    \int_\polytope \nabla\cdot\mathbf{X} \d\x = \int_{\partial\polytope} \mathbf{X}\cdot\mathbf{n} \d\sigma,
\end{equation*}

\noindent where $\d\sigma$ denotes the Lebesgue measure on $\partial\polytope$ and $\mathbf{n}$ denotes the unit outward normal to $\polytope$ on $\partial\polytope$.

By setting $\mathbf{X}=(\x-\x_0)f(\x)$ and invoking Euler's homogeneous function theorem, it can be shown that
\begin{equation} \label{eqn:volume_face_relation_general}
    \int_\polytope f(\x) \d\x = \frac{1}{d+q} \left[ \sum_{i=1}^{m(\polytope)} a_i \int_{\facet_i} f(\x) \d\sigma + \int_\polytope \x_0\cdot\nabla f(\x) \ \d\x \right].
\end{equation}

\noindent Equation \reff{eqn:volume_face_relation_general} relates integration of $f$ over $\polytope$ in terms of integration of $f$ over the facets of $\polytope$, as well as the integration of $\nabla f$ over $\polytope$. By selecting $\x_0=\bm{0}$, \reff{eqn:volume_face_relation_general} reduces to the more common expression relating the integrals of $f$ over $\polytope$ and $\partial\polytope$ \cite{antonietti2018fast,chin2015numerical,chin2020efficient,lasserre1999integration}:
\begin{equation*}
    \int_\polytope f(\x) \d\x = \frac{1}{d+q} \sum_{i=1}^{m(\polytope)} a_i \int_{\facet_i} f(\x) \d\sigma.
\end{equation*}

As shown in \cite{antonietti2018fast,chin2015numerical,lasserre1998integration,lasserre1999integration}, for example, one may recursively apply the generalised Stokes' theorem to express the integral $\int_{\facet_i} f(\x) \d\sigma$ in terms of integrals over the $(d-2)$-dimensional boundary facets $\{\facet_{ij}\}_{j=1}^{m(\facet_i)}$ of $\facet_i$:
\begin{equation} \label{eqn:face_edge_relation_general}
    \int_{\facet_i} f(\x) \d\sigma = \frac{1}{d-1+q} \left[ \sum_{j=1}^{m(\facet_i)} a_{ij} \int_{\facet_{ij}} f(\x) \ \d\nu + \int_{\facet_i} \x_1\cdot\nabla f(\x) \ \d\sigma \right],
\end{equation}

\noindent where $\d\nu$ denotes the Lebesgue measure on $\partial\facet_i$, $\x_1\in\mathcal{H}_i$ is arbitrary and $a_{ij}$ denotes the Euclidean distance from $\x_1$ to $\facet_{ij}$.

Equations \reff{eqn:volume_face_relation_general} and \reff{eqn:face_edge_relation_general} can be generalised to give the integral of $f$ over any $k$-dimensional facet $\facet$, $0\le k\le d$, in terms of the integral of the same function over the boundary $\partial\facet=\{\partial\facet_i\}_{i=1}^{m(\facet)}$ and the integral of $\nabla f$ over $\facet$:
\begin{equation} \label{eqn:facet_to_subfacet_general}
    \int_\facet f(\x) \d s = \frac{1}{\dim\facet+q} \left[ \sum_{i=1}^{m(\facet)} \dist(\partial\facet_i,\x_\facet) \int_{\partial\facet_i} f(\x) \d\xi + \int_\facet \x_\facet\cdot\nabla f(\x) \d s \right],
\end{equation}

\noindent where $\d s$ (respectively $\d\xi$) denotes the $k$-dimensional (respectively $(k-1)$-dimensional) Lebesgue measure on $\facet$ (respectively $\partial\facet$), $\x_\facet$ is an arbitrary point contained in $\facet$ (or the $k$-dimensional hyperplane containing $\facet$), and $\dist(\partial\facet_i,\x_\facet)$ denotes the Euclidean distance from $\x_\facet$ to (the $k$-dimensional hyperplane containing) $\partial\facet_i$. Finally, in the case where $\facet=\x_\facet\in\reals^d$ (that is, $\dim\facet=0$), the right-hand-side of \reff{eqn:facet_to_subfacet_general} can be replaced with the point evaluation $f(\x_\facet)$.

In the case where $f(\x)=\x^{\bm{\alpha}}=\prod_{k=1}^d x_k^{\alpha_k}$ is a monomial function in $d$ variables, \reff{eqn:facet_to_subfacet_general} gives the following recursive formula for the integrals
\begin{equation} \label{eqn:facet_to_subfacet_monomial}
    \ifun(\facet,\bm{\alpha}) := \int_\facet \x^{\bm{\alpha}} \d s = \frac{1}{\dim\facet+|\bm{\alpha}|} \left[ \sum_{i=1}^{m(\facet)} \dist(\partial\facet_i,\x_\facet) \ifun(\partial\facet_i,\bm{\alpha}) + \sum_{j=1}^d \alpha_j (\x_\facet)_j \ifun(\facet,\bm{\alpha}-\mathbf{e}_j) \right],
\end{equation}
where $\mathbf{e}_i$, $i=1,2,\ldots,d$, denote the standard unit basis vectors in $\reals^d$.

Equation \reff{eqn:facet_to_subfacet_monomial} can be used to generate sets of integrals of monomial functions over $\polytope$. To this end, we consider the problem of evaluating the following set of integrals:
\begin{equation*}
    \ifun(\facet,\mathcal{J}) = \left\{ \ifun(\facet,\bm{\alpha}) = \int_\facet \x^{\bm{\alpha}} \d\x : \bm{\alpha}\in\mathcal{J} \right\}.
\end{equation*}

\noindent Here, $\mathcal{J}\subset\naturals^d$ denotes a set of multi-indices satisfying the following property: for each $\bm{\alpha}\in\mathcal{J}$ and $1\le i\le d$, we either have $\alpha_i=0$ or $\bm{\alpha}-\mathbf{e}_i\in\mathcal{J}$. This motivates the definition of Algorithm~\ref{alg:monomial_integration}, cf. \cite{antonietti2018fast}.
\begin{algorithm}[t!]
\caption{Evaluation of the set $\ifun(\facet,\mathcal{J})$ for a given $k$-dimensional facet $\facet$, $0\le k\le d$.}
\label{alg:monomial_integration}
\begin{algorithmic}[1]
    \Procedure{ComputeIntegrals}{$\facet$,$\mathcal{J}$}
        \State $\ifun(\facet,\bm{\alpha})\gets0$ for all $\bm{\alpha}\in\mathcal{J}$
        \State Select $\x_\facet$ as any point in (the $(\dim\facet)$-dimensional hyperplane containing) $\facet$
        \State Get boundary facets $\{\partial\facet_i\}_{i=1}^{m(\facet)}$
        \For{$i=1,\dots,m(\facet)$}
            \If{$\dist(\partial\facet_i,\x_\facet)\ne0$ and $\ifun(\partial\facet_i,\mathcal{J})$ not already computed}
                \State $\ifun(\partial\facet_i,\mathcal{J})\gets\textsc{ComputeIntegrals}(\partial\facet_i,\mathcal{J})$
            \EndIf
        \EndFor
        \For{$\bm{\alpha}\in\mathcal{J}$}
            \State \begin{equation*}
                    \ifun(\facet,\bm{\alpha}) \gets \frac{1}{\dim\facet+|\bm{\alpha}|} \Bigg[ \sum_{i=1}^{m(\facet)} \dist(\partial\facet_i,\x_\facet) \ifun(\partial\facet_i,\bm{\alpha}) + \sum_{j=1}^d \alpha_j (\x_\facet)_j \ifun(\facet,\bm{\alpha}-\mathbf{e}_j) \Bigg].
            \end{equation*}
        \EndFor
        \State \Return $\ifun(\facet,\mathcal{J})$
    \EndProcedure
\end{algorithmic}
\end{algorithm}

\begin{remark}[Termination of Algorithm \ref{alg:monomial_integration}]\label{remark:monomial_integration_termination}
    The recursion in Algorithm \ref{alg:monomial_integration} terminates when \linebreak$\textsc{ComputeIntegrals}(\facet,\mathcal{J})$ is called with $\dim\facet=0$; that is, $\facet=\x_\facet$ is a single point. Since $\partial\facet=\x_\facet$, the recursive function call in line 7 will not be executed.
\end{remark}

\subsection{Analysis of quadrature-free monomial integration algorithm} \label{section:monomial_integration_quad_free_analysis_subsec}

The computational complexity of Algorithm \ref{alg:monomial_integration} can be understood in terms of the size of the requested monomial set $\mathcal{J}$, as well as the complexity of the domain of integration $\polytope$.

\begin{theorem}[Time and space complexity of Algorithm \ref{alg:monomial_integration}]\label{thm:monomial_integral_alg_complexity}
    Assuming that the set
    \begin{equation*}
        \left\{ \dist(\partial\facet_i,\x_\facet) : 1\le i\le m(\facet), 0\le\dim\facet\le d \right\}
    \end{equation*}
    
    \noindent is pre-computed, the time complexity of Algorithm \ref{alg:monomial_integration}, measured as the total number of floating-point operations required to assemble $\ifun(\polytope,\mathcal{J})$, is $\bigO(\chi_1(\polytope)|\mathcal{J}|)$, where $|\mathcal{J}|$ denotes the number of requested monomial integrals and
    \begin{equation*}
        \chi_1(\polytope) = \sum_{k=0}^{d} \sum_{\substack{\facet\subseteq\polytope \\ \dim\facet=k}} (m(\facet)+d).
    \end{equation*}

    \noindent The space complexity of Algorithm 1, measured as the total number of floating-point numbers required to store $\ifun(\polytope,\mathcal{J})$, is $\bigO(\chi_2(\polytope)|\mathcal{J}|)$, where
    \begin{equation*}
        \chi_2(\polytope) = \sum_{k=0}^{d} \sum_{\substack{\facet\subseteq\polytope \\ \dim\facet=k}} 1.
    \end{equation*}
\end{theorem}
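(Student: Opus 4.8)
The plan is to analyze the recursive structure of Algorithm~\ref{alg:monomial_integration} directly, counting the floating-point operations and storage locations associated with each recursive invocation of \textsc{ComputeIntegrals}. The central observation is that, because of the memoization built into line~6 (the check that $\ifun(\partial\facet_i,\mathcal{J})$ is ``not already computed''), the procedure \textsc{ComputeIntegrals}$(\facet,\mathcal{J})$ is executed \emph{exactly once} for each distinct facet $\facet\subseteq\polytope$ of each dimension $0\le k\le d$. Thus the total work is a sum over all such facets of the work done \emph{locally} at each facet, with the recursive descent contributing nothing extra beyond this one-time-per-facet accounting. I would begin by making this memoization claim precise, establishing a bijection between the set of recursive calls that actually execute the update loop and the set $\{\facet\subseteq\polytope\}$ of all sub-facets of $\polytope$ of every dimension.

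First I would bound the local cost at a single facet $\facet$. For each fixed multi-index $\bm{\alpha}\in\mathcal{J}$, the update formula on line~11 of the algorithm (that is, equation~\reff{eqn:facet_to_subfacet_monomial}) consists of two sums: the boundary sum over the $m(\facet)$ sub-facets $\partial\facet_i$, and the gradient-correction sum over the $d$ coordinate directions $j=1,\dots,d$. Each term in each sum contributes a bounded number of multiply-add operations (the distance and coordinate weights are pre-computed by hypothesis, so these are treated as available constants), and the final scaling by $1/(\dim\facet+|\bm{\alpha}|)$ is one further operation. Hence the per-$\bm{\alpha}$ cost at $\facet$ is $\bigO(m(\facet)+d)$, and summing over the $|\mathcal{J}|$ multi-indices gives a local cost of $\bigO\bigl((m(\facet)+d)\,|\mathcal{J}|\bigr)$ at facet $\facet$.

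Summing this local cost over every facet of every dimension then yields the total time complexity. Using the one-call-per-facet principle established above, the aggregate is
\begin{equation*}
    \sum_{k=0}^{d}\ \sum_{\substack{\facet\subseteq\polytope\\ \dim\facet=k}} \bigO\bigl((m(\facet)+d)\,|\mathcal{J}|\bigr) = \bigO\!\left(\chi_1(\polytope)\,|\mathcal{J}|\right),
\end{equation*}
which is exactly the claimed bound, with $\chi_1(\polytope)=\sum_{k=0}^{d}\sum_{\dim\facet=k}(m(\facet)+d)$. The space complexity follows by the same counting: the algorithm stores the array $\{\ifun(\facet,\bm{\alpha}):\bm{\alpha}\in\mathcal{J}\}$ of $|\mathcal{J}|$ numbers once per facet $\facet$, so the total storage is $\sum_{k=0}^d\sum_{\dim\facet=k}|\mathcal{J}|=\chi_2(\polytope)\,|\mathcal{J}|$, giving the $\bigO(\chi_2(\polytope)|\mathcal{J}|)$ bound.

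I expect the main obstacle to be the rigorous justification of the memoization/one-call-per-facet claim, rather than the arithmetic bookkeeping. Two subtleties require care. First, one must confirm that the ``not already computed'' guard genuinely prevents a facet shared by several higher-dimensional facets from being recomputed; this requires the implicit assumption that the algorithm maintains a global table of already-visited facets keyed by facet identity (and that shared facets are recognised as identical), which I would state explicitly. Second, one must verify that the recursion terminates and visits each facet, so that the sum over facets is exactly the work performed --- this is where Remark~\ref{remark:monomial_integration_termination} is invoked, since the base case $\dim\facet=0$ halts the descent via point evaluation, and the guard on line~6 (via the condition $\dist(\partial\facet_i,\x_\facet)\ne0$, which I would note can be arranged by the choice of $\x_\facet$) ensures no spurious infinite recursion. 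Once the bijection between executed calls and sub-facets is secured, the complexity estimates reduce to the elementary per-facet counting sketched above.
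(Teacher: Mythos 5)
Your proposal is correct and follows essentially the same route as the paper's own proof: a per-facet operation count for the update formula \reff{eqn:facet_to_subfacet_monomial} of $\bigO(m(\facet)+d)$ flops per multi-index (the paper makes this exact, namely $2m(\facet)+4d$), combined with the observation that \textsc{ComputeIntegrals} executes its update loop once per facet, then summed over all facets of all dimensions, with the space bound obtained by storing $|\mathcal{J}|$ values per facet. Your additional remarks on memoization and termination are sound but only make explicit what the paper treats as immediate ("it is not difficult to see that \textsc{ComputeIntegrals} is executed exactly once for each $k$-dimensional facet").
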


\begin{proof}
    We first analyse the number of floating-point operations required to compute the right-hand side of \reff{eqn:facet_to_subfacet_monomial} for a single facet $\facet$:
    \begin{itemize}
        \item the sum $S_1=\sum_{i=1}^{m(\facet)} \dist(\partial\facet_i,\x_\facet) \ifun(\partial\facet_i,\bm{\alpha})$ requires $m(\facet)$ products and $m(\facet)-1$ additions;
        \item the sum $S_2 = \sum_{j=1}^d \alpha_j(\x_\facet)_j\ifun(\facet,\bm{\alpha}-\mathbf{e}_j)$ requires $2d$ products and $d-1$ additions;
        \item the sum $S_3=\dim\facet+|\bm{\alpha}|=\dim\facet+\sum_{j=1}^d \alpha_j$ requires $d$ additions;
        \item the final result $\ifun(\facet,\bm{\alpha})=\frac{S_1+S_2}{S_3}$ requires one addition and one division.
    \end{itemize}

    \noindent We deduce that the right-hand-side of \reff{eqn:facet_to_subfacet_monomial} may be computed in $2m(\facet)+4d$ floating-point operations. Since this operation is called for each $\bm{\alpha}\in\mathcal{J}$, lines 10-12 of \textsc{ComputeIntegrals} requires $c_\facet=(2m(\facet)+4d)|\mathcal{J}|$ floating-point operations.

    It is not difficult to see that \textsc{ComputeIntegrals} is executed exactly once for each $k$-dimensional facet $\facet\subseteq\polytope$ with $0\le k\le d$. Thus, summing $c_\facet$ over each $\facet$, the time complexity given in the statement of the theorem is proven. The space complexity can be proven based on noting that $\ifun(\facet,\mathcal{J})$ is a set with $|\mathcal{J}|$ elements which must be stored for each facet $\facet\subseteq\polytope$.
\end{proof}

\begin{remark}[Simplifications for $\dim(\facet)=d$ and $\dim(\facet)=0$]\label{remark:monomial_flops_simplifications_0_d}
    When $\dim(\facet)=d$, any selection of $\x_\facet\in\reals^d$ can be made. By choosing $\x_\facet=\bm{0}$, the second sum in \reff{eqn:facet_to_subfacet_monomial} is eliminated and lines 10-12 in Algorithm \ref{alg:monomial_integration} can be performed in $(2m(\facet)+d)|\mathcal{J}|$ floating-point operations.

    When $\dim(\facet)=0$ (i.e. $\facet=\x_\facet\in\reals^d$), $\ifun(\facet,\bm{\alpha})$ can be performed in a couple of ways:
    \begin{itemize}
        \item Direct computation of $\ifun(\facet,\bm{\alpha})=\prod_{k=1}^d (\x_\facet)_k^{\alpha_k}$ - in this case, line 11 of Algorithm \ref{alg:monomial_integration} can be performed in $\bigO(d+\sum_{k=1}^d \log(1+\alpha_i))$ floating-point operations via binary exponentiation \cite{Knuth81}.
        \item Recursive computation of $\ifun(\facet,\bm{\alpha})=\alpha_k(\x_\facet)_k\ifun(\facet,\bm{\alpha}-\mathbf{e}_k)$ for some $1\le k\le d$ - in this case, line 11 of Algorithm \ref{alg:monomial_integration} can be performed in $\bigO(1)$ floating-point operations. 
    \end{itemize}
\end{remark}

The scalings of the time and space complexities reported in Theorem \ref{thm:monomial_integral_alg_complexity} as functions of the geometric complexity of $\polytope$ can be understood by visualising the recursive nature of Algorithm \ref{alg:monomial_integration} as a directed acyclic graph $G=G(\polytope)=(V,E)$. The vertex set $V=V(\polytope)$ is defined as the set of $k$-dimensional facets $\facet\subseteq\polytope$ for $0\le k\le d$. The edge set $E=E(\polytope)$ is defined as follows: for any facets $\facet_1, \facet_2\in V$, the directed edge $(\facet_1,\facet_2)\in E$ if and only if $\dim\facet_2 = \dim\facet_1-1$ and $\facet_2$ lies on the boundary of $\facet_1$. Figure~\ref{fig:tetrahedron_example} gives an example of the construction of $G(\polytope)$.

\begin{figure}[t!]
\centering
    \begin{subfigure}[B]{0.45\textwidth}
         \centering
         \resizebox{\textwidth}{!}{
         \begin{tikzpicture}
	       \node (c) at (0,0,0) {$c$};
	       \node (b) at (5,0,0) {$b$};
	       \node (d) at (0,5,0) {$d$};
	       \node (a) at (0,0,5) {$a$};
	
	       \draw[-] (a) -- (b);
    	   \draw[-] (a) -- (c);
	       \draw[-] (a) -- (d);
	       \draw[-] (b) -- (c);
	       \draw[-] (b) -- (d);
	       \draw[-] (c) -- (d);
        \end{tikzpicture}
        }
        \caption{A tetrahedron $T_3$ with labelled vertices.}
        \label{fig:tetrahedron}
     \end{subfigure}
     \hfill
     \begin{subfigure}[B]{0.45\textwidth}
         \centering
         \resizebox{\textwidth}{!}{
         \begin{tikzpicture} [roundnode/.style={circle, draw=black!60, fill=white!5, very thick, minimum size=7mm}]
        	\node[roundnode] (abcd) at (0,8) {$abcd$};
        	\node[roundnode] (abc) at (-3,6) {$abc$};
        	\node[roundnode] (abd) at (-1,6) {$abd$};
        	\node[roundnode] (acd) at (1,6) {$acd$};
        	\node[roundnode] (bcd) at (3,6) {$bcd$};
        	\node[roundnode] (ab) at (-4,4) {$ab$};
        	\node[roundnode] (ac) at (-2.4,4) {$ac$};
        	\node[roundnode] (ad) at (-0.8,4) {$ad$};
        	\node[roundnode] (bc) at (0.8,4) {$bc$};
        	\node[roundnode] (bd) at (2.4,4) {$bd$};
        	\node[roundnode] (cd) at (4,4) {$cd$};
        	\node[roundnode] (a) at (-3,2) {$a$};
        	\node[roundnode] (b) at (-1,2) {$b$};
        	\node[roundnode] (c) at (1,2) {$c$};
        	\node[roundnode] (d) at (3,2) {$d$};
        	\node[roundnode] (empty) at (0,0) {$\emptyset$};
        	
        	\draw[->] (abcd) -- (abc);
        	\draw[->] (abcd) -- (abd);
        	\draw[->] (abcd) -- (acd);
        	\draw[->] (abcd) -- (bcd);
        	
        	\draw[->] (abc) -- (ab);
        	\draw[->] (abc) -- (ac);
        	\draw[->] (abc) -- (bc);
        	\draw[->] (abd) -- (ab);
        	\draw[->] (abd) -- (ad);
        	\draw[->] (abd) -- (bd);
        	\draw[->] (acd) -- (ac);
        	\draw[->] (acd) -- (ad);
        	\draw[->] (acd) -- (cd);
        	\draw[->] (bcd) -- (bc);
        	\draw[->] (bcd) -- (bd);
        	\draw[->] (bcd) -- (cd);
        	
        	\draw[->] (ab) -- (a);
        	\draw[->] (ab) -- (b);
        	\draw[->] (ac) -- (a);
        	\draw[->] (ac) -- (c);
        	\draw[->] (ad) -- (a);
        	\draw[->] (ad) -- (d);
        	\draw[->] (bc) -- (b);
        	\draw[->] (bc) -- (c);
        	\draw[->] (bd) -- (b);
        	\draw[->] (bd) -- (d);
        	\draw[->] (cd) -- (c);
        	\draw[->] (cd) -- (d);
        	
        	\draw[dashed,->] (a) -- (empty);
        	\draw[dashed,->] (b) -- (empty);
        	\draw[dashed,->] (c) -- (empty);
        	\draw[dashed,->] (d) -- (empty);
        \end{tikzpicture}
        }
        \caption{The associated graph $G(T_3)$.}
        \label{fig:tetrahedron_face_lattice}
     \end{subfigure}
     \caption{Example of a tetrahedron $\polytope=T_3$ (left) and the associated recursive call graph $G(\polytope)$ (right). Each vertex of $G(\polytope)$ represents a facet of the tetrahedron (e.g. a vertex, an edge or a face). Edges between vertices in $G(\polytope)$  denote the relationship between facets on the boundaries of other facets (e.g. the edge $ab$ lies on the boundary of the face $abc$).}
     \label{fig:tetrahedron_example}
\end{figure}
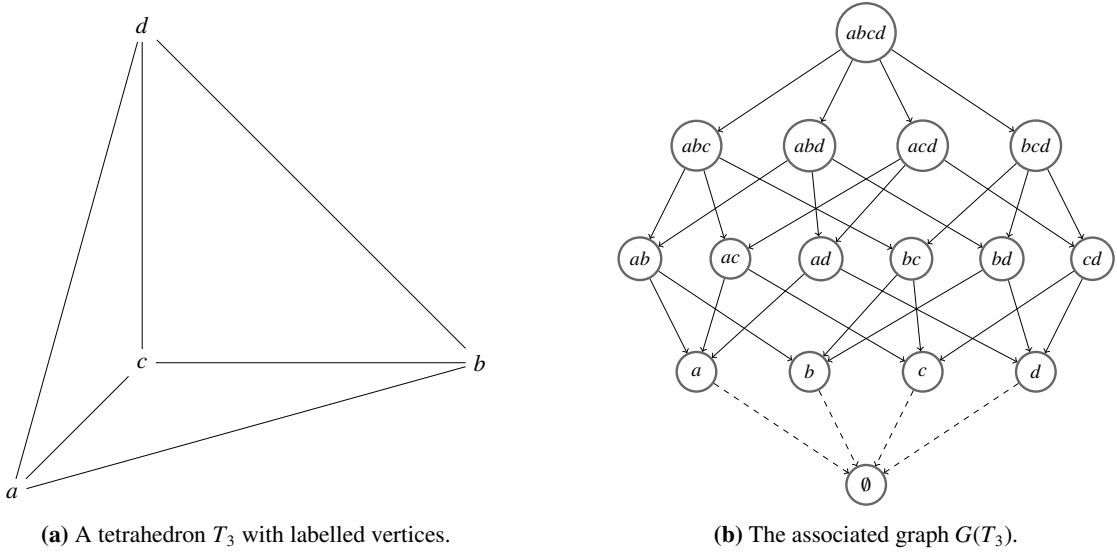

\begin{remark}[Facet lattice of $\polytope$] \label{remark:face_lattice_emptyset_inclusion}
    It is convenient to add an extra vertex $\emptyset$ to $V(\polytope)$, which we define as having dimension $-1$, and extra directed edges to $E(\polytope)$ from each vertex of $\polytope$ to $\emptyset$. The resulting graph $G(\polytope)$ then resembles a Hasse diagram representing the facet lattice formed from the facets of $\polytope$ ordered by inclusion \cite{bueler2000exact,kaibel2002computing}.
\end{remark}

\begin{remark}[Algorithm \ref{alg:monomial_integration} as a depth-first search] \label{remark:depth_first_search}
    The recursion of Algorithm 1 can be understood as a depth-first search of the graph $G(\polytope)$ starting at $\polytope$ where $\ifun(\facet,\mathcal{J})$ is assembled at each unvisited face $\facet$. By contrast, the implementation of Algorithm \ref{alg:monomial_integration} given in \cite{chin2020efficient} can be understood as a breadth-first search of the transpose graph $G'(\polytope)$ starting at $\emptyset$, where $G'(\polytope)$ differs from $G(\polytope)$ only by reversal of the directed edges.
\end{remark}

The time complexity of Algorithm \ref{alg:monomial_integration} can be understood in terms of the sizes of the vertex and edge sets $V(\polytope)$ and $E(\polytope)$. In particular, we have that
\begin{align*}
    \sum_{k=0}^d \sum_{\substack{\facet\subset\polytope \\ \dim\facet=k}} (2m(\facet)+4d) &= 2 \sum_{k=0}^d \sum_{\substack{\facet\subset\polytope \\ \dim\facet=k}} m(\facet) + 4d \sum_{k=0}^d \sum_{\substack{\facet\subset\polytope \\ \dim\facet=k}} 1 \\
    &= 2|E(\polytope)| + 4d|V(\polytope)|
\end{align*}
and the time and space complexities reported in Theorem \ref{thm:monomial_integral_alg_complexity} can be alternatively be written as \linebreak$\bigO\left( (|E(\polytope)|+d|V(\polytope)|) |\mathcal{J}|\right)$ and $\bigO(|V(\polytope)| |\mathcal{J}|)$, respectively. Table \ref{tab:example_time_space_complexities_monomial_integration} gives the time and space complexities of Algorithm \ref{alg:monomial_integration} for a number of different classes of polytopes, as well as the size of the vertex and edge sets $V(\polytope)$ and $E(\polytope)$ in the graph $G(\polytope)$, respectively.

\begin{table}[t!]
    \centering
    \begin{tabular}{c|cc|cc}
        Family & $|V(\polytope)|$ & $|E(\polytope)|$ & Time complexity & Space complexity \\ \hline
        $d$-dimensional simplex & $2^{d+1}$ & $2^d(d+1)$ & $\bigO(2^dd|\mathcal{J}|)$ & $\bigO(2^d|\mathcal{J}|)$ \\
        $d$-dimensional hypercube & $3^d+1$ & $2\cdot 3^{d-1}d$ & $\bigO(3^dd|\mathcal{J}|)$ & $\bigO(3^d|\mathcal{J}|)$ \\
        $n$-sided polygon & $2(n+1)$ & $4n$ & $\bigO(n|\mathcal{J}|)$ & $\bigO(n|\mathcal{J}|)$ \\
        $n$-gonal prism & $2(3n+2)$ & $15n+2$ & $\bigO(n|\mathcal{J}|)$ & $\bigO(n|\mathcal{J}|)$ \\
        $n$-based pyramid & $4(n+1)$ & $2(5n+1)$ & $\bigO(n|\mathcal{J}|)$ & $\bigO(n|\mathcal{J}|)$
    \end{tabular}
    \caption{Time and space complexities of Algorithm \ref{alg:monomial_integration} for different polytopes $\polytope$ as well as the set sizes $|V(\polytope)|$ and $|E(\polytope)|$ under the assumption that an extra vertex $\emptyset$ is added to $V(\polytope)$.}
    \label{tab:example_time_space_complexities_monomial_integration}
\end{table}

In practical finite element applications, $\polytope$ may denote a $d$-dimensional mesh element, $d=2,3$, in a polytopic mesh $\mathcal{T}$ of some domain of interest $\Omega$. Historically, simplicial or tensor-product elements have been used to partition the domain, though general polytopic elements have been proposed more recently, cf. \cite{Antonietti2016,doi:10.1142/S0218202512500492,cangiani2014hp,CaDoGeHo2017,hho_book_2021}, for example. The following theorem characterises the time and space complexities of Algorithm \ref{alg:monomial_integration} in these special cases.

\begin{theorem}[Complexity of Algorithm 1 for $d=2,3$] \label{thm:monomial_integral_alg_complexity_d_2_3}
    Let $\polytope\subset\reals^d$, $d=2,3$, denote a convex polygon or polyhedron.
    The time and space complexities of Algorithm~\ref{alg:monomial_integration}, measured in the sense described in Theorem~\ref{thm:monomial_integral_alg_complexity}, are $\bigO(e|\mathcal{J}|)$, where $e$ denotes the number of (1-dimensional) edges of $\polytope$.
\end{theorem}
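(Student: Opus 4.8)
The plan is to derive the stated $\bigO(e|\mathcal{J}|)$ bounds directly from Theorem~\ref{thm:monomial_integral_alg_complexity}, using the reformulation of its time and space complexities as $\bigO((|E(\polytope)|+d|V(\polytope)|)|\mathcal{J}|)$ and $\bigO(|V(\polytope)||\mathcal{J}|)$, respectively. Since $d\in\{2,3\}$ is a fixed constant, the factor $d$ is absorbed into the implied constant, so it suffices to prove that both $|V(\polytope)|=\bigO(e)$ and $|E(\polytope)|=\bigO(e)$, where $e$ is the number of $1$-dimensional facets (edges) of $\polytope$. To this end I would write $n_k$ for the number of $k$-dimensional facets of $\polytope$, so that $n_1=e$ and $n_d=1$, and use the identities $|V(\polytope)|=1+\sum_{k=0}^d n_k$ and $|E(\polytope)|=\sum_{\facet\subseteq\polytope}m(\facet)$, the latter holding because the out-degree of each vertex $\facet$ of $G(\polytope)$ is precisely $m(\facet)$.

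For $d=2$ the claim is immediate. A convex $n$-gon has $n_0=n_1=e=n$ and $n_2=1$, so $|V(\polytope)|=2n+2=\bigO(e)$. Likewise $m(\polytope)=n$, each of the $n$ edges contributes $m(\facet)=2$, and each of the $n$ vertices contributes $m(\facet)=1$ (the edge to $\emptyset$), whence $|E(\polytope)|=n+2n+n=\bigO(e)$.

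The substantive case is $d=3$, where I must control the number of vertices $n_0$ and faces $n_2$ in terms of $e=n_1$. Here I would invoke Euler's polyhedron formula, $n_0-n_1+n_2=2$, which is valid because the boundary of a convex polyhedron is homeomorphic to the $2$-sphere; it gives $n_0+n_2=e+2$ and hence $n_0,n_2\le e+2$, so that $|V(\polytope)|=1+\sum_{k=0}^3 n_k=2+n_0+n_2+e=\bigO(e)$. For the edge count I would split $\sum_\facet m(\facet)$ by facet dimension: the top cell contributes $m(\polytope)=n_2$; the faces contribute $\sum_{\dim\facet=2}m(\facet)=2e$, since each edge of $\polytope$ lies on exactly two faces; the edges contribute $2e$, since each has exactly two endpoints; and the vertices contribute $n_0$ (the edges to $\emptyset$). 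Summing gives $|E(\polytope)|=n_2+2e+2e+n_0$, which is $\bigO(e)$ by the bound just obtained.

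The two nontrivial ingredients are Euler's formula and the edge--face incidence identity $\sum_{\dim\facet=2}m(\facet)=2e$; both rely on convexity, which guarantees that $\partial\polytope$ is a closed $2$-manifold of genus zero. This is the only real obstacle, and it is geometric rather than combinatorial. I would remark that Euler's formula can in fact be sidestepped: since every face has at least three edges and every vertex has degree at least three, double counting yields $3n_2\le 2e$ and $3n_0\le 2e$, so $n_0,n_2=\bigO(e)$ holds for any polyhedron whose boundary is a simple-polygon $2$-manifold, convex or not. Finally, as a consistency check I would verify these exact counts against Table~\ref{tab:example_time_space_complexities_monomial_integration}, for instance the tetrahedron, for which the formulas above give $|V(\polytope)|=2^{d+1}=16$ and $|E(\polytope)|=2^d(d+1)=32$.
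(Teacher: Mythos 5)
Your proposal is correct and follows essentially the same route as the paper's own proof: both reduce the claim to showing $|V(\polytope)|=\bigO(e)$ and $|E(\polytope)|=\bigO(e)$, handle $d=2$ by direct counting, and for $d=3$ combine Euler's formula $v-e+f=2$ for the convex polyhedron with the fact that each edge lies on exactly two faces and has exactly two endpoints, arriving at the same exact counts $|V(\polytope)|=2(e+2)$ and $|E(\polytope)|=5e+2$. Your side remark that Euler's formula can be bypassed via the double-counting bounds $3n_0\le 2e$ and $3n_2\le 2e$ is a pleasant extra in the spirit of the paper's Remark~\ref{remark:non_convex_polyhedral_analysis_extension}, but it does not alter the fact that the core argument coincides with the paper's.
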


\begin{proof}
    It suffices to show that $|V(\polytope)|=\bigO(e)$ and $|E(\polytope)|=\bigO(e)$.
    
    For the case $d=2$, a polygon $\polytope$ with $e$ edges also has $e$ vertices. Therefore, we have that $|V(\polytope)| = 2(e+1)$ and $|E(\polytope)|=4e$, as required.

    For the case $d=3$, let $v$ (respectively $f$) denote the number of vertices (respectively number of faces) of $\polytope$. Since $\polytope$ is convex, the Euler characteristic of the surface of $\polytope$ is equal to 2 \cite{grunbaum1967convex}, and hence
    \begin{equation*}
        v-e+f=2.
    \end{equation*}

    \noindent The size of the vertex set $V(\polytope)$ is given by
    \begin{align*}
        |V(\polytope)| = v+e+f+2 
        = 2(e+2).
    \end{align*}

    \noindent To compute the size of the edge set $E(\polytope)$, we note that each edge (or 1-dimensional facet) $\facet$ in $G(\polytope)$ has in-degree and out-degree 2, since each edge lies on the boundary of two faces and has two vertices on its own boundary. We therefore have that
    \begin{align*}
        |E(\polytope)| = v + 2e + 2e + f 
        = 5e + 2.
    \end{align*}
\end{proof}

\begin{remark}[Extension to non-convex polyhedra]\label{remark:non_convex_polyhedral_analysis_extension}
    The argument presented in the proof of Theorem \ref{thm:monomial_integral_alg_complexity_d_2_3} remains valid when both $\polytope$ and its $(d-1)$-dimensional facets $\{\facet_i\}_{i=1}^{m(\polytope)}$ are simply-connected. That is, Theorem \ref{thm:monomial_integral_alg_complexity_d_2_3} holds if neither $\polytope$ nor any of its facets have any holes.
\end{remark}

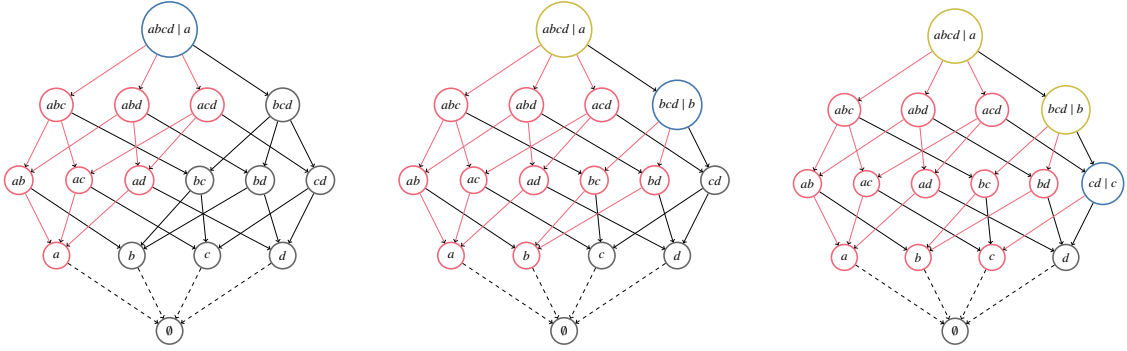
\begin{figure}[h]
    \centering
    \begin{subfigure}[B]{0.3\textwidth}
        \centering
        \resizebox{\textwidth}{!}{
        \begin{tikzpicture}[roundnode/.style={circle, draw=black!60, fill=white!5, very thick, minimum size=7mm}]
        	\node[roundnode,draw=tolcol1] (abcd) at (0,8) {$abcd \mid a$};
        	\node[roundnode,draw=tolcol2] (abc) at (-3,6) {$abc$};
        	\node[roundnode,draw=tolcol2] (abd) at (-1,6) {$abd$};
        	\node[roundnode,draw=tolcol2] (acd) at (1,6) {$acd$};
        	\node[roundnode] (bcd) at (3,6) {$bcd$};
        	\node[roundnode,draw=tolcol2] (ab) at (-4,4) {$ab$};
        	\node[roundnode,draw=tolcol2] (ac) at (-2.4,4) {$ac$};
        	\node[roundnode,draw=tolcol2] (ad) at (-0.8,4) {$ad$};
        	\node[roundnode] (bc) at (0.8,4) {$bc$};
        	\node[roundnode] (bd) at (2.4,4) {$bd$};
        	\node[roundnode] (cd) at (4,4) {$cd$};
        	\node[roundnode,draw=tolcol2] (a) at (-3,2) {$a$};
        	\node[roundnode] (b) at (-1,2) {$b$};
        	\node[roundnode] (c) at (1,2) {$c$};
        	\node[roundnode] (d) at (3,2) {$d$};
        	\node[roundnode] (empty) at (0,0) {$\emptyset$};
        	
        	\draw[->,draw=tolcol2] (abcd) -- (abc);
        	\draw[->,draw=tolcol2] (abcd) -- (abd);
        	\draw[->,draw=tolcol2] (abcd) -- (acd);
        	\draw[->] (abcd) -- (bcd);
        	
        	\draw[->,draw=tolcol2] (abc) -- (ab);
        	\draw[->,draw=tolcol2] (abc) -- (ac);
        	\draw[->] (abc) -- (bc);
        	\draw[->,draw=tolcol2] (abd) -- (ab);
        	\draw[->,draw=tolcol2] (abd) -- (ad);
        	\draw[->] (abd) -- (bd);
        	\draw[->,draw=tolcol2] (acd) -- (ac);
        	\draw[->,draw=tolcol2] (acd) -- (ad);
        	\draw[->] (acd) -- (cd);
        	\draw[->] (bcd) -- (bc);
        	\draw[->] (bcd) -- (bd);
        	\draw[->] (bcd) -- (cd);
        	
        	\draw[->,draw=tolcol2] (ab) -- (a);
        	\draw[->] (ab) -- (b);
        	\draw[->,draw=tolcol2] (ac) -- (a);
        	\draw[->] (ac) -- (c);
        	\draw[->,draw=tolcol2] (ad) -- (a);
        	\draw[->] (ad) -- (d);
        	\draw[->] (bc) -- (b);
        	\draw[->] (bc) -- (c);
        	\draw[->] (bd) -- (b);
        	\draw[->] (bd) -- (d);
        	\draw[->] (cd) -- (c);
        	\draw[->] (cd) -- (d);
        	
        	\draw[dashed,->] (a) -- (empty);
        	\draw[dashed,->] (b) -- (empty);
        	\draw[dashed,->] (c) -- (empty);
        	\draw[dashed,->] (d) -- (empty);
        \end{tikzpicture}
        }
    \end{subfigure}
    \hfill
    \begin{subfigure}[B]{0.3\textwidth}
        \centering
        \resizebox{\textwidth}{!}{
        \begin{tikzpicture}[roundnode/.style={circle, draw=black!60, fill=white!5, very thick, minimum size=7mm}]
        	\node[roundnode,draw=tolcol3] (abcd) at (0,8) {$abcd \mid a$};
        	\node[roundnode,draw=tolcol2] (abc) at (-3,6) {$abc$};
        	\node[roundnode,draw=tolcol2] (abd) at (-1,6) {$abd$};
        	\node[roundnode,draw=tolcol2] (acd) at (1,6) {$acd$};
        	\node[roundnode,draw=tolcol1] (bcd) at (3,6) {$bcd \mid b$};
        	\node[roundnode,draw=tolcol2] (ab) at (-4,4) {$ab$};
        	\node[roundnode,draw=tolcol2] (ac) at (-2.4,4) {$ac$};
        	\node[roundnode,draw=tolcol2] (ad) at (-0.8,4) {$ad$};
        	\node[roundnode,draw=tolcol2] (bc) at (0.8,4) {$bc$};
        	\node[roundnode,draw=tolcol2] (bd) at (2.4,4) {$bd$};
        	\node[roundnode] (cd) at (4,4) {$cd$};
        	\node[roundnode,draw=tolcol2] (a) at (-3,2) {$a$};
        	\node[roundnode,draw=tolcol2] (b) at (-1,2) {$b$};
        	\node[roundnode] (c) at (1,2) {$c$};
        	\node[roundnode] (d) at (3,2) {$d$};
        	\node[roundnode] (empty) at (0,0) {$\emptyset$};
        	
        	\draw[->,draw=tolcol2] (abcd) -- (abc);
        	\draw[->,draw=tolcol2] (abcd) -- (abd);
        	\draw[->,draw=tolcol2] (abcd) -- (acd);
        	\draw[->] (abcd) -- (bcd);
        	
        	\draw[->,draw=tolcol2] (abc) -- (ab);
        	\draw[->,draw=tolcol2] (abc) -- (ac);
        	\draw[->] (abc) -- (bc);
        	\draw[->,draw=tolcol2] (abd) -- (ab);
        	\draw[->,draw=tolcol2] (abd) -- (ad);
        	\draw[->] (abd) -- (bd);
        	\draw[->,draw=tolcol2] (acd) -- (ac);
        	\draw[->,draw=tolcol2] (acd) -- (ad);
        	\draw[->] (acd) -- (cd);
        	\draw[->,draw=tolcol2] (bcd) -- (bc);
        	\draw[->,draw=tolcol2] (bcd) -- (bd);
        	\draw[->] (bcd) -- (cd);
        	
        	\draw[->,draw=tolcol2] (ab) -- (a);
        	\draw[->] (ab) -- (b);
        	\draw[->,draw=tolcol2] (ac) -- (a);
        	\draw[->] (ac) -- (c);
        	\draw[->,draw=tolcol2] (ad) -- (a);
        	\draw[->] (ad) -- (d);
        	\draw[->,draw=tolcol2] (bc) -- (b);
        	\draw[->] (bc) -- (c);
        	\draw[->,draw=tolcol2] (bd) -- (b);
        	\draw[->] (bd) -- (d);
        	\draw[->] (cd) -- (c);
        	\draw[->] (cd) -- (d);
        	
        	\draw[dashed,->] (a) -- (empty);
        	\draw[dashed,->] (b) -- (empty);
        	\draw[dashed,->] (c) -- (empty);
        	\draw[dashed,->] (d) -- (empty);
        \end{tikzpicture}
        }
    \end{subfigure}
    \hfill
    \begin{subfigure}[B]{0.3\textwidth}
        \centering
        \resizebox{\textwidth}{!}{
        \begin{tikzpicture}[roundnode/.style={circle, draw=black!60, fill=white!5, very thick, minimum size=7mm}]
        	\node[roundnode,draw=tolcol3] (abcd) at (0,8) {$abcd \mid a$};
        	\node[roundnode,draw=tolcol2] (abc) at (-3,6) {$abc$};
        	\node[roundnode,draw=tolcol2] (abd) at (-1,6) {$abd$};
        	\node[roundnode,draw=tolcol2] (acd) at (1,6) {$acd$};
        	\node[roundnode,draw=tolcol3] (bcd) at (3,6) {$bcd \mid b$};
        	\node[roundnode,draw=tolcol2] (ab) at (-4,4) {$ab$};
        	\node[roundnode,draw=tolcol2] (ac) at (-2.4,4) {$ac$};
        	\node[roundnode,draw=tolcol2] (ad) at (-0.8,4) {$ad$};
        	\node[roundnode,draw=tolcol2] (bc) at (0.8,4) {$bc$};
        	\node[roundnode,draw=tolcol2] (bd) at (2.4,4) {$bd$};
        	\node[roundnode,draw=tolcol1] (cd) at (4,4) {$cd \mid c$};
        	\node[roundnode,draw=tolcol2] (a) at (-3,2) {$a$};
        	\node[roundnode,draw=tolcol2] (b) at (-1,2) {$b$};
        	\node[roundnode,draw=tolcol2] (c) at (1,2) {$c$};
        	\node[roundnode] (d) at (3,2) {$d$};
        	\node[roundnode] (empty) at (0,0) {$\emptyset$};
        	
        	\draw[->,draw=tolcol2] (abcd) -- (abc);
        	\draw[->,draw=tolcol2] (abcd) -- (abd);
        	\draw[->,draw=tolcol2] (abcd) -- (acd);
        	\draw[->] (abcd) -- (bcd);
        	
        	\draw[->,draw=tolcol2] (abc) -- (ab);
        	\draw[->,draw=tolcol2] (abc) -- (ac);
        	\draw[->] (abc) -- (bc);
        	\draw[->,draw=tolcol2] (abd) -- (ab);
        	\draw[->,draw=tolcol2] (abd) -- (ad);
        	\draw[->] (abd) -- (bd);
        	\draw[->,draw=tolcol2] (acd) -- (ac);
        	\draw[->,draw=tolcol2] (acd) -- (ad);
        	\draw[->] (acd) -- (cd);
        	\draw[->,draw=tolcol2] (bcd) -- (bc);
        	\draw[->,draw=tolcol2] (bcd) -- (bd);
        	\draw[->] (bcd) -- (cd);
        	
        	\draw[->,draw=tolcol2] (ab) -- (a);
        	\draw[->] (ab) -- (b);
        	\draw[->,draw=tolcol2] (ac) -- (a);
        	\draw[->] (ac) -- (c);
        	\draw[->,draw=tolcol2] (ad) -- (a);
        	\draw[->] (ad) -- (d);
        	\draw[->,draw=tolcol2] (bc) -- (b);
        	\draw[->] (bc) -- (c);
        	\draw[->,draw=tolcol2] (bd) -- (b);
        	\draw[->] (bd) -- (d);
        	\draw[->,draw=tolcol2] (cd) -- (c);
        	\draw[->] (cd) -- (d);
        	
        	\draw[dashed,->] (a) -- (empty);
        	\draw[dashed,->] (b) -- (empty);
        	\draw[dashed,->] (c) -- (empty);
        	\draw[dashed,->] (d) -- (empty);
        \end{tikzpicture}
        }
    \end{subfigure}
    \caption{Effect of pruning on the recursive call graph for Algorithm \ref{alg:monomial_integration} in the case $\polytope=T_3$ as in Figure \ref{fig:tetrahedron_example}. Left-to-right: first three recursive executions of \textsc{ComputeIntegrals}. Blue node: facet $\facet$ associated with current execution of $\textsc{ComputeIntegrals}(\facet,\mathcal{J})$ and choice of reference point $\x_\facet$. Yellow nodes: facets $\facet$ associated with previous executions of $\textsc{ComputeIntegrals}(\facet,\mathcal{J})$ and choice of reference point $\x_\facet$. Red nodes: facets $\facet$ eliminated from recursion as a result of pruning; i.e., unvisited facets with $\dist(\facet,\x_{\facet'})=0$ for some previously-selected reference point $\x_{\facet'}$.}
    \label{fig:tetrahedron_face_lattice_pruned}
 \end{figure}

One may reduce the time and space complexities of Algorithm \ref{alg:monomial_integration} through judicious selection of the reference points $\x_\facet$. When $\x_\facet$ is chosen as a vertex of $\facet$, one may avoid a number of recursive calls to \textsc{ComputeIntegrals} for some boundary facets of $\facet$. We shall refer to the resulting implementation as \emph{pruned} - this is illustrated in Figure \ref{fig:tetrahedron_face_lattice_pruned}. While a complete time and space complexity analysis of Algorithm \ref{alg:monomial_integration} with pruning is not presented here, pruning can lead to significant computational savings on simple geometries. For instance, the time complexity of Algorithm \ref{alg:monomial_integration} in the case where $\polytope$ is a $d$-dimensional simplex is $\bigO(2^dd|\mathcal{J}|)$; with pruning, this can be reduced to $\bigO(d^2|\mathcal{J}|)$. On more complicated domains, the effects of pruning are likely to be less significant. To our knowledge, finding an optimal pruning of $G(\polytope)$ for a general polytope - that is, a shortest sequence of visited facets $(\facet_n)_{n\ge0}$ and corresponding reference points $(\x_{\facet_n})_{n\ge0}$ for which Algorithm \ref{alg:monomial_integration} can compute $\ifun(\polytope,\mathcal{J})$ - is an open problem.

\subsubsection{Time complexity of distance pre-computation}

It is convenient to omit the computation of distances of the form $\dist(\partial\facet_i,\x_\facet)$ in the proof of Theorem \ref{thm:monomial_integral_alg_complexity} since such quantities do not need to be re-computed for each $\ifun(\facet,\bm{\alpha})$. In cases where $|\mathcal{J}|$ is very small or $\polytope$ is high-dimensional, the evaluation of these distances can become the most computationally-expensive part of Algorithm \ref{alg:monomial_integration}. For instance, it is shown in \cite{bueler2000exact} that the time complexity of the computation of the volume of a $d$-dimensional hypercube via the quadrature-free integration method is $\bigO(d^43^d)$, which is a factor of $\bigO(d^3)$ larger than that predicted by Theorem \ref{thm:monomial_integral_alg_complexity}. To remedy this, the following lemma measures the complexity of operations omitted in the proof of Theorem \ref{thm:monomial_integral_alg_complexity}.
\begin{lemma}[Time complexity of distance pre-computation]
    The time complexity of evaluating the set
    \begin{equation*}
        \left\{ \dist(\partial\facet_i,\x_\facet) : 1\le i\le m(\facet), 0\le\dim\facet\le d \right\}
    \end{equation*}

    \noindent is $\bigO(\chi_3(\polytope))$, where
    \begin{equation*}
        \chi_3(\polytope) = d \sum_{k=0}^{d} \sum_{\substack{\facet\subseteq\polytope \\ \dim\facet=k}} k^2 m(\facet).
    \end{equation*}

    \noindent Alternatively, the time complexity may be expressed as $\bigO(d^3|E(\polytope)|)$.
\end{lemma}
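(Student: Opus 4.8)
The plan is to bound the cost of a single distance evaluation $\dist(\partial\facet_i,\x_\facet)$ and then sum over all facet--subfacet incidences. Fix a facet $\facet\subseteq\polytope$ with $\dim\facet=k$ and one of its boundary facets $\partial\facet_i$, whose affine hull $A_i$ is a $(k-1)$-dimensional subspace of $\reals^d$. By the definition accompanying \reff{eqn:facet_to_subfacet_general}, $\dist(\partial\facet_i,\x_\facet)$ is the perpendicular distance from $\x_\facet$ to $A_i$ measured inside the $k$-dimensional hyperplane $\mathcal{H}$ containing $\facet$. First I would observe that, since both $\x_\facet$ and $A_i$ lie in $\mathcal{H}$, the foot of this perpendicular also lies in $\mathcal{H}$; hence the intrinsic distance coincides with the ambient Euclidean distance from $\x_\facet$ to $A_i$ in $\reals^d$. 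This lets me compute the latter directly and avoid ever constructing an explicit orthonormal basis of $\mathcal{H}$.

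To evaluate the distance in closed form I would select $k$ affinely independent vertices $\mathbf{p}_0,\dots,\mathbf{p}_{k-1}$ of $\partial\facet_i$ (available from the mesh data structure, or extractable within the stated budget) and set $\mathbf{u}_j=\mathbf{p}_j-\mathbf{p}_0$, so that $\{\mathbf{u}_1,\dots,\mathbf{u}_{k-1}\}$ is a basis for the direction space of $A_i$. Writing $\mathbf{G}(\mathbf{w}_1,\dots,\mathbf{w}_r)$ for the $r\times r$ Gram matrix with entries $\mathbf{w}_a\cdot\mathbf{w}_b$, the volume-ratio identity gives
\begin{equation*}
    \dist(\partial\facet_i,\x_\facet)^2 = \frac{\det\mathbf{G}(\mathbf{u}_1,\dots,\mathbf{u}_{k-1},\x_\facet-\mathbf{p}_0)}{\det\mathbf{G}(\mathbf{u}_1,\dots,\mathbf{u}_{k-1})}.
\end{equation*}
The cost splits in two: assembling the two Gram matrices requires $\bigO(k^2)$ inner products of vectors in $\reals^d$, hence $\bigO(k^2 d)$ operations, while evaluating both determinants by LU factorisation costs $\bigO(k^3)$. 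Because $\facet\subseteq\polytope\subset\reals^d$ forces $k\le d$, the determinant cost is absorbed and a single distance is obtained in $\bigO(k^2 d)$ operations. The degenerate cases are consistent with this: for $k=0$ one has $\partial\facet_i=\emptyset$ and the weight $k^2$ vanishes, while for $k=1$ the distance is simply $\|\x_\facet-\mathbf{p}_0\|$.

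Summing over the $m(\facet)$ boundary facets of $\facet$ gives $\bigO(k^2 d\,m(\facet))$ per facet, and summing over all facets of every dimension $0\le k\le d$ yields
\begin{equation*}
    \bigO\!\left( d\sum_{k=0}^{d}\sum_{\substack{\facet\subseteq\polytope\\ \dim\facet=k}} k^2 m(\facet)\right)=\bigO(\chi_3(\polytope)),
\end{equation*}
as claimed. For the alternative form I would bound $k^2\le d^2$ for $0\le k\le d$ and invoke the identity $\sum_{k}\sum_{\dim\facet=k} m(\facet)=|E(\polytope)|$ from the discussion preceding Table \ref{tab:example_time_space_complexities_monomial_integration} (each facet--subfacet incidence is exactly one edge of $G(\polytope)$), giving $\chi_3(\polytope)\le d^3|E(\polytope)|$. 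The only genuinely delicate point is the per-distance estimate: a naive projection suggests an unavoidable $\bigO(k^3)$ linear solve, and the key observation making the stated bound sharp is that $k\le d$ collapses $\bigO(k^2 d+k^3)$ down to $\bigO(k^2 d)$.
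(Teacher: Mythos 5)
Your proof is correct and takes essentially the same route as the paper's: both reduce a single distance evaluation to assembling a Gram matrix in $\bigO(k^2d)$ flops plus an $\bigO(k^3)$ dense linear-algebra step, absorb the cubic term using $k\le d$, and then sum $\bigO(k^2 d\,m(\facet))$ over all facets, identifying $\sum_{\facet} m(\facet)=|E(\polytope)|$ for the alternative bound. The only difference is the closed-form kernel for the distance --- the paper solves the normal equations $\mathbf{At}=\mathbf{f}$ for the orthogonal projection onto the affine hull, whereas you evaluate a Gram-determinant volume ratio --- which leaves the complexity analysis unchanged.
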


\begin{proof}
    Let $\facet$ denote a $k$-dimensional facet with selected reference point $\x_\facet$. Let $\partial\facet$ denote any of its $(k-1)$-dimensional boundary facets. Suppose that $\partial\facet$ has $n$ vertices $\{\x_i\}_{i=0}^{n-1}$ and note that $k\le n$. The $(k-1)$-dimensional hyperplane $\mathcal{H}$ containing $\partial\facet$ can be uniquely defined by the points $\{\x_i\}_{i=0}^{k-1}$: for any $\x\in\mathcal{H}$, there exists $\mathbf{t}=(t_i)_{i=1}^{k-1}\in\reals^{k-1}$ such that
    \begin{equation*}
        \x = \x(\mathbf{t}) = \x_0 + \sum_{i=1}^{k-1} (\x_i-\x_0) t_i.
    \end{equation*}

    \noindent The distance $\dist(\partial\facet,\x_\facet)$ (or, more precisely, the distance between $\mathcal{H}$ and $\x_\facet$) is the minimum value of $|| \x(\mathbf{t})-\x_\facet ||_2$ over $\mathbf{t}\in\reals^{k-1}$; this occurs when $\mathbf{t}$ solves
    \begin{equation*}
        \mathbf{At} = \mathbf{f},
    \end{equation*}

    \noindent where the entries of $\mathbf{A}\in\reals^{(k-1)\times(k-1)}$ and $\mathbf{f}\in\reals^{k-1}$ are given by $(\mathbf{A})_{ij} = (\x_i-\x_0)\cdot(\x_j-\x_0)$ and $(\mathbf{f})_i = (\x_0-\x_\facet)\cdot(\x_i-\x_0)$, respectively. The number of floating-point operations required to assemble and solve the linear system above is $\bigO(k^2d)$ and $\bigO(k^3)$, respectively; thus, the time complexity of computing $\dist(\partial\facet,\x_\facet)$ for a single $(k-1)$-dimensional facet $\partial\facet$ is $\bigO(k^2d)$.

    For each $k$-dimensional facet $\facet$, $0\le k\le d$, $\dist(\partial\facet_i,\x_\facet)$ is computed for each $1\le i\le m(\facet)$. Summing the time complexity of a single computation of $\dist(\partial\facet,\x_\facet)$ over these limits yields the first result stated in the proof; the second result is obtained by bounding the complexity of a single computation of $\dist(\partial\facet,\x_\facet)$ from above by $\bigO(d^3)$.
\end{proof}

\begin{corollary}[Time complexity of Algorithm \ref{alg:monomial_integration} with distance computation] \label{coroll:monom_int_final_time_complexity}
    The time complexity of Algorithm \ref{alg:monomial_integration} including the evaluation of the set
    \begin{equation*}
        \left\{ \dist(\partial\facet_i,\x_\facet) : 1\le i\le m(\facet), 0\le\dim\facet\le d \right\}
    \end{equation*}

    \noindent is $\bigO\left(d^3|E(\polytope)| + \left(d|V(\polytope)|+|E(\polytope)|\right) |\mathcal{J}|\right)$.
\end{corollary}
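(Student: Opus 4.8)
The plan is to observe that Corollary~\ref{coroll:monom_int_final_time_complexity} follows immediately by combining the two complexity bounds already established, namely Theorem~\ref{thm:monomial_integral_alg_complexity} and the preceding Lemma on the cost of distance pre-computation. The key structural observation is that the full execution of Algorithm~\ref{alg:monomial_integration} decomposes cleanly into two successive, non-interacting phases: (i) an initial phase in which every distance $\dist(\partial\facet_i,\x_\facet)$ appearing in \reff{eqn:facet_to_subfacet_monomial} is computed exactly once and cached; and (ii) a subsequent phase in which the recursion in lines 5--12 is carried out, reusing the stored distances rather than recomputing them. This separation is precisely the scenario postulated by Theorem~\ref{thm:monomial_integral_alg_complexity}, whose statement assumes the distance set to be pre-computed, so the costs of the two phases may be bounded independently and then summed.

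First I would invoke the preceding Lemma to bound the cost of phase (i) by $\bigO(\chi_3(\polytope)) = \bigO(d^3|E(\polytope)|)$. Next I would apply Theorem~\ref{thm:monomial_integral_alg_complexity}, in the edge/vertex reformulation derived immediately after its proof, to bound the cost of phase (ii) by $\bigO\left((|E(\polytope)|+d|V(\polytope)|)|\mathcal{J}|\right)$. Since the total number of floating-point operations is the sum of the operation counts of the two phases, and since the asymptotic bound of a sum equals the sum of the individual asymptotic bounds, adding these two estimates yields $\bigO\left(d^3|E(\polytope)| + (d|V(\polytope)|+|E(\polytope)|)|\mathcal{J}|\right)$, which is exactly the claimed complexity.

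I do not anticipate a substantive obstacle, as the corollary is essentially a bookkeeping consequence of the two prior results. The only point requiring genuine care is the justification that the distance evaluations are performed once and stored, so that phases (i) and (ii) are truly additive rather than the distances being recomputed inside each evaluation of \reff{eqn:facet_to_subfacet_monomial}; this caching is exactly what legitimises applying the "pre-computed distances" hypothesis of Theorem~\ref{thm:monomial_integral_alg_complexity} in tandem with the standalone bound of the Lemma, and it is also what motivated omitting the distance cost from the proof of Theorem~\ref{thm:monomial_integral_alg_complexity} in the first place.
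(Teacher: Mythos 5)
Your proposal is correct and matches the paper's (implicit) derivation exactly: the corollary is obtained by summing the Lemma's bound $\bigO(d^3|E(\polytope)|)$ for the one-time distance pre-computation with the edge/vertex reformulation $\bigO\left((|E(\polytope)|+d|V(\polytope)|)|\mathcal{J}|\right)$ of Theorem~\ref{thm:monomial_integral_alg_complexity} for the recursion itself. Your additional remark that the distances are computed once and cached---so the two cost bounds are genuinely additive---is precisely the observation that justifies invoking the theorem's ``pre-computed'' hypothesis.
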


\section{DGFEM discretisation of the transport equation} \label{sec:dgfem}

In this section we consider the application of the numerical integration algorithm outlined in the previous section for the computation of the volume integrals arising within the DGFEM discretisation of the linear transport equation. To this end, given an open bounded polyhedral domain $\Omega\subset\reals^d$, $d=2,3$, we consider the following advection-reaction equation: find $u:\Omega\rightarrow\reals$ such that
\begin{align}
    \nabla\cdot(\mathbf{b} u) + cu &= f \quad\textnormal{ in }\Omega, \label{eqn:linear_transport_pde} \\
    u &= g \quad\textnormal{ on } \Gamma_{in}, \nonumber
\end{align}

\noindent where $c,f:\Omega\rightarrow\reals$, $g:\Gamma_{in}\rightarrow\reals$ and $\mathbf{b}:\Omega\rightarrow\reals^d$ are given data terms, and $\Gamma_{in}=\{ \x\in\partial\Omega : \mathbf{b}\cdot\mathbf{n}(\x)<0 \}$ denotes the inflow boundary of $\Omega$, where $\mathbf{n}(\x)$ denotes the outward unit normal to $\Omega$ at $\x\in\partial\Omega$. In the following, we assume that $\mathbf{b}$ is a constant velocity vector, while $c$ is a given as a piecewise-constant function with respect to the elements in the underlying finite element mesh $\mathcal{T}$ defined below. For the type of applications we have in mind, namely the numerical approximation of the linear Boltzmann transport problem, cf.~\cite{houston2023efficient}, these assumptions are not restrictive. That said, the proposed quadrature free implementation can easily be extended to include the case when $\mathbf{b}$, $c$, $f$ and $g$ are polynomial functions (or piecewise-polynomial with respect to the elements of $\mathcal{T})$.

\subsection{Discretisation}

We discretise the linear transport equation \reff{eqn:linear_transport_pde} using a DGFEM approach. To this end, let $\mathcal{T}$ denote a subdivision of the spatial domain $\Omega$ into open non-overlapping polytopic elements $\kappa$ such that $\bar{\Omega}=\bigcup_{\kappa\in\mathcal{T}} \bar{\kappa}$. We denote by $\mathcal{E}$ the set of faces in $\mathcal{T}$, which are defined as the $(d-1)$-dimensional planar facets of elements $\kappa\in\mathcal{T}$.

To each $\kappa\in\mathcal{T}$ we respectively denote by $h_\kappa>0$ and $p_\kappa\ge0$ the diameter of $\kappa$ and the polynomial degree on $\kappa$. The spatial finite element space is defined by
\begin{equation*}
    \mathbb{V} = \left\{ v\in L_2(\Omega) : v|_\kappa\in\mathbb{P}^{p_\kappa}(\kappa) \textnormal{ for all }\kappa\in\mathcal{T} \right\},
\end{equation*}

\noindent where $\mathbb{P}^{p_\kappa}(\kappa)$ denotes the space of all $d$-variate polynomials with maximum total degree at most $p_\kappa$ on $\kappa$.

Given $\kappa\in\mathcal{T}$, we define the inflow and outflow parts of $\partial\kappa$ by
\begin{align*}
    \partial_-\kappa &= \left\{ \x\in\partial\kappa : \mathbf{b}\cdot\mathbf{n}(\x) < 0 \right\}, \\
    \partial_+\kappa &= \left\{ \x\in\partial\kappa : \mathbf{b}\cdot\mathbf{n}(\x) \ge 0 \right\},
\end{align*}

\noindent respectively, where $\mathbf{n}(\x)$ denotes the outward unit normal to $\kappa$ at $\x\in\partial\kappa$. For a sufficiently-smooth function $v$, we denote by $v_\kappa^+$ (respectively, $v_\kappa^-$) the interior (respectively, exterior) trace of $v$ on $\partial\kappa$ (respectively, $\partial\kappa\setminus\partial\Omega$). Since it will always be clear which element $\kappa\in\mathcal{T}$ the quantities $v_\kappa^\pm$ correspond to, the subscript $\kappa$ will be suppressed for the remainder of this article.

The DGFEM discretisation of \reff{eqn:linear_transport_pde} with upwind numerical flux reads as follows: find $u_h\in\mathbb{V}$ such that
\begin{equation} \label{eqn:dgfem_linear_transport}
    a(u_h,v_h) = \ell(v_h)
\end{equation}

\noindent for all $v_h\in\mathbb{V}$, where $a:\mathbb{V}\times\mathbb{V}\rightarrow\reals$ and $\ell:\mathbb{V}\rightarrow\reals$ are defined, respectively, for all $w_h,v_h\in\mathbb{V}$ by
\begin{align*}
    a(w_h,v_h) &= \sum_{\kappa\in\mathcal{T}} \Bigg( \int_\kappa \left( -w_h\mathbf{b}\cdot\nabla v_h + c w_hv_h \right) \d\x + \int_{\partial_+\kappa} |\mathbf{b}\cdot\mathbf{n}| \ w_h^+ v_h^+ \d s \\
    &\quad\quad\quad\quad - \int_{\partial_-\kappa\setminus\partial\Omega} |\mathbf{b}\cdot\mathbf{n}| \ w_h^-v_h^+ \d s \Bigg), \\
    \ell(v_h) &= \sum_{\kappa\in\mathcal{T}} \left( \int_\kappa f v_h \d\x + \int_{\partial_-\kappa\cap\partial\Omega} |\mathbf{b}\cdot\mathbf{n}| \ g v_h^+ \d s \right).
\end{align*}

\subsection{Basis functions on polytopic elements}

For the remainder of this article we will assume that each element $\kappa\in\mathcal{T}$ is equipped with a basis comprising of the polynomial space $\mathbb{P}^p(\kappa)$ for some fixed $p\ge0$. Following \cite{antonietti2018fast,cangiani2014hp}, we construct a Cartesian bounding box $B_\kappa$ for each element $\kappa\in\mathcal{T}$ such that $\bar{\kappa}\subseteq\bar{B_\kappa}$. Furthermore, we define a reference bounding box $\hat{B}=(-1,1)^d$ and a family of affine mappings $F_\kappa:\hat{B}\rightarrow B_\kappa$ such that $F_\kappa(\hat{\x})=\mathbf{J}_\kappa\hat{\x}+\mathbf{t}_\kappa$, where $\mathbf{J}_\kappa\in\reals^{d\times d}$ is the (diagonal) Jacobi matrix of $F_\kappa$ and $\mathbf{t}_\kappa\in\reals^d$ is the translation between the origin $\bm{0}\in\hat{B}$ and the barycentre of $B_\kappa$.

We define a basis of $\mathbb{P}^p(\hat{B})$ as follows: we denote by $\{\mathcal{L}_n(t)\}_{n=0}^\infty$ the family of orthogonal (or orthonormal) univariate Legendre polynomials on $L_2(-1,1)$. For each multi-index $\bm{\alpha}$ of length $d$ with $0\le|\bm{\alpha}|\le p$ we define the basis function $\hat{\phi}_{\bm{\alpha}}:\hat{B}\rightarrow\reals$ by
\begin{equation} \label{eqn:basis_fn_ref_ele_defn}
    \hat{\phi}_{\bm{\alpha}}(\hat{\x}) = \prod_{k=1}^d \mathcal{L}_{\alpha_k}(\hat{x}_k).
\end{equation}

\noindent It is straightforward to see that $\{\hat{\phi}_{\bm{\alpha}}(\hat{\x})\}_{0\le|\bm{\alpha}|\le p}$ forms a basis for $\mathbb{P}^p(\hat{B})$. The basis functions $\{\phi_{\bm{\alpha},\kappa}(\x)\}_{0\le|\bm{\alpha}|\le p}$ for $\mathbb{P}^p(\kappa)$ are constructed upon application of the element mapping; more precisely, $\phi_{\bm{\alpha},\kappa}(\x) = \hat{\phi}_{\bm{\alpha}}(F_\kappa^{-1}(\x))$. The set
\begin{equation*}
    \{ \phi_{\bm{\alpha},\kappa}(\x) : \kappa\in\mathcal{T}, 0\le|\bm{\alpha}|\le p \}
\end{equation*}

\noindent forms a basis on each element $\kappa$, $\kappa\in\mathcal{T}$, for the finite element space $\mathbb{V}$. Henceforth, we will identify a bijection between the set of multi-indices $\{\bm{\alpha}\}_{0\le|\bm{\alpha}|\le p}$ and the set $\{1,\dots,\dim\mathbb{P}^p(\kappa)\}$ such that the $i^{th}$ basis function on $\kappa$ is denoted by $\phi_{\bm{\alpha}^{(i)},\kappa}(\x)$.

We conclude this section by expanding the products of $\mathcal{L}_n(t)$ and their derivatives as a sum of monomials:
\begin{align}
    \mathcal{L}_m(t) \mathcal{L}_n(t) &= \sum_{k=0}^{m+n} C_{m,n,k} t^k, \label{eqn:decomp_prod_leg_poly} \\
    \mathcal{L}_m'(t) \mathcal{L}_n(t) &= \sum_{k=0}^{m+n-1} C_{m,n,k}' t^k. \label{eqn:decomp_prod_leg_poly_deriv} 
\end{align}

\noindent The sets of coefficients $\{ C_{m,n,k} : 0\le m,n\le p, 0\le k\le m+n \}$ and $\{ C_{m,n,k}' : 0\le m,n\le p, 0\le k\le m+n-1 \}$ may be pre-computed before assembly.

\section{Analysis of volume matrix assembly} \label{sec:matrix_assembly}

In this section we focus on the efficient assembly of the DGFEM matrix arising on the right-hand side of \reff{eqn:dgfem_linear_transport}. Typically, when quadrature is employed, the evaluation of the volume integrals appearing in the right-hand side of \reff{eqn:dgfem_linear_transport} are far more expensive to compute than the corresponding face integrals since in the former case significantly more quadrature points must be employed. 
With that in mind, we focus on the acceleration of the assembly of the local element matrix contributions $\mathbf{A}_\kappa\in\reals^{\dim\mathbb{P}^p(\kappa)\times\dim\mathbb{P}^{p}(\kappa)}$, where
\begin{equation} \label{eqn:local_matrix_component_wise_defn}
    (\mathbf{A}_\kappa)_{ij} = \int_\kappa \left( -\phi_{\bm{\alpha}^{(j)},\kappa}(\x) \mathbf{b}\cdot\nabla \phi_{\bm{\alpha}^{(i)},\kappa}(\x) + c \phi_{\bm{\alpha}^{(i)},\kappa}(\x) \phi_{\bm{\alpha}^{(j)},\kappa}(\x) \right) \d\x.
\end{equation}
While we will not discuss the exploitation of quadrature-free methods to compute the face integrals appearing in \reff{eqn:dgfem_linear_transport}, we refer to \cite{antonietti2018fast} for the application of the proposed numerical integration approach for the computation of the face integrals arising from a DGFEM discretisation of a second-order elliptic problem.

\subsection{Quadrature based assembly}

As a point of comparison, we will consider quadrature rules of the form
\begin{equation} \label{eqn:quadrature_approximation}
    \int_\polytope f(\x) \d\x \approx \sum_{i=1}^N \omega_i f(\x_i),
\end{equation}

\noindent where $f:\polytope\rightarrow\reals$, $\{\x_i\}_{i=1}^N\subset\reals^d$ denotes a set of quadrature points with non-negative quadrature weights $\{\omega_i\}_{i=1}^N\subset\reals$.

A number of methods may be employed to construct the $N$-point quadrature scheme $Q_N=\{(\x_i,\omega_i)\}_{i=1}^N$. 
One of the most popular and simple to implement strategies is to sub-tessellate the integration domain $\polytope$ into simplicial subdomains (triangles in 2D, tetrahedra in 3D), on which standard quadrature schemes can be used \cite{sukumar2004conforming}. However, the resulting quadrature scheme on $\polytope$ may contain an excessive number of points and weights. It has been demonstrated that numerical optimisation algorithms can generate efficient numerical quadrature schemes on arbitrary polygonal domains, cf., for example, \cite{mousavi2010generalized}.

In the case when $f\in\mathbb{P}^p(\polytope)$, the approximation \reff{eqn:quadrature_approximation} can be exact. Indeed, it can be shown that the smallest $N$-point quadrature scheme which is exact for all polynomial functions of a given degree $p$ contains
\begin{equation} \label{eqn:quad_point_size_lower_bnd}
    N\ge \binom{\lfloor\frac{p}{2}\rfloor+d}{d}
\end{equation}

\noindent quadrature points and weights \cite{stroud1971approximate}.

\begin{algorithm}[t!]
\caption{Computation of $\mathbf{A}_\kappa$ using quadrature.}
\label{alg:stiffness_assembly_quadrature}
\begin{algorithmic}[1]
    \Procedure{ComputeElementMatrix}{$\kappa$}
        \State Compute quadrature scheme $\{(\x_q,\omega_q)\}_{q=1}^N$ on $\kappa$
        \State Pre-compute $\{\phi_{\bm{\alpha},\kappa}\}_{0\le|\bm{\alpha}|\le p}$ and $\{\nabla\phi_{\bm{\alpha},\kappa}\}_{0\le|\bm{\alpha}|\le p}$ at quadrature points
        \State $\mathbf{A}_\kappa\gets0$
        \For{$q=1,\dots,N$}
            \For{$i=1,\dots,\dim\mathbb{P}^p(\kappa)$}
                \For{$j=1,\dots,\dim\mathbb{P}^p(\kappa)$}
                    \State $I\gets \left( c\phi_{\bm{\alpha}^{(i)},\kappa}(\x_q) - \mathbf{b}\cdot\nabla\phi_{\bm{\alpha}^{(i)},\kappa}(\x_q) \right) \phi_{\bm{\alpha}^{(j)},\kappa}(\x_q)$
                    \State $(\mathbf{A}_\kappa)_{i,j} \gets (\mathbf{A}_\kappa)_{i,j} + \omega_q I$
                \EndFor
            \EndFor
        \EndFor
        \State \Return $\mathbf{A}_\kappa$
    \EndProcedure
\end{algorithmic}
\end{algorithm}

Algorithm \ref{alg:stiffness_assembly_quadrature} presents a pseudocode for a typical implementation of numerical quadrature to evaluate the integrals \reff{eqn:local_matrix_component_wise_defn} appearing in the DGFEM discretisation of the transport equation. Noting that lines 8 and 9 of Algorithm 2 require $2(d+1)$ and $2$ floating-point operations to evaluate, respectively, it can be seen that the number of floating-point operations performed in the main body (lines 5-12) is given by $2(d+2) (\dim\mathbb{P}^p(\kappa))^2 N$. Furthermore, noting that $\dim\mathbb{P}^p(\kappa)=\binom{p+d}{d}$ and that the integrand of \reff{eqn:local_matrix_component_wise_defn} is a polynomial of total degree at most $2p$, the number of floating-point operations required to exactly evaluate $\mathbf{A}_\kappa$ for a single element $\kappa$ using Algorithm \ref{alg:stiffness_assembly_quadrature} is at least $2(d+1)\binom{p+d}{d}^3$. Here, we have assumed that the lower bound on the number of quadrature points in \reff{eqn:quad_point_size_lower_bnd} is attainable; that is, we set $N=N_{opt}=\binom{p+d}{d}$.

\subsection{Quadrature-free based assembly}

The quadrature-free integration method outlined in Section \ref{section:monomial_integration_quad_free} is not immediately applicable to the case of exactly evaluating the entries of $\mathbf{A}_\kappa$ in \reff{eqn:local_matrix_component_wise_defn}, since the integrand is typically not a homogeneous function. We remedy this issue by decomposing the integrand as a sum of monomials which may be integrated separately using Algorithm \ref{alg:monomial_integration}. We shall skip the details for brevity; see \cite{antonietti2018fast} for a more detailed treatment of similar integrals.

Since the basis functions $\{\phi_{\bm{\alpha},\kappa}\}_{0\le|\bm{\alpha}|\le p}$ are only supported on $\kappa$, we may apply the inverse map $F_\kappa^{-1}$ to obtain an expression for the matrix entry $(\mathbf{A}_{\kappa})_{i,j}$ as an integral over the mapped element $\hat{\kappa}=F_\kappa^{-1} (\kappa) \subseteq \hat{B}$. It can be shown that
\begin{align*}
    (\mathbf{A}_{\kappa})_{i,j} &= \int_{\hat{\kappa}} \left( -\hat{\phi}_{\bm{\alpha}^{(j)}}(\hat{\x}) \hat{\mathbf{b}}_{\kappa} \cdot \hat{\nabla} \hat{\phi}_{\bm{\alpha}^{(i)}}(\hat{\x}) + c \hat{\phi}_{\bm{\alpha}^{(i)}}(\hat{\x}) \hat{\phi}_{\bm{\alpha}^{(j)}}(\hat{\x}) \right) |\mathbf{J}_\kappa| \d\hat{\x},
\end{align*}

\noindent where $\hat{\mathbf{b}}_\kappa=\mathbf{J}_\kappa^{-1}\mathbf{b}$ denotes a scaled wind direction. Finally, by using the definition \reff{eqn:basis_fn_ref_ele_defn} of the basis functions $\{\hat{\phi}_{\bm{\alpha}}\}_{0\le\bm{\alpha}\le p}$ (which we remark are independent of $\kappa$) and the decompositions \reff{eqn:decomp_prod_leg_poly} and \reff{eqn:decomp_prod_leg_poly_deriv}, we arrive at the following expression for $(\mathbf{A}_{\kappa})_{i,j}$:
\begin{equation} \label{eqn:element_matrix_reconstruction_formula}
    (\mathbf{A}_{\kappa})_{i,j} = \sum_{\bm{0}\le\bm{\alpha}\le\bm{\alpha}^{(i)}+\bm{\alpha}^{(j)}} c_{\bm{\alpha}}^{(i,j)} \int_{\hat{\kappa}} \hat{\x}^{\bm{\alpha}} \d\hat{\x},
\end{equation}

\noindent where the coefficients $\{c_{\bm{\alpha}}^{(i,j)}\}_{\bm{0}\le\bm{\alpha}\le\bm{\alpha}^{(i)}+\bm{\alpha}^{(j)}}$ are defined for each $1\le i,j\le \dim\mathbb{P}^p(\kappa)$ by
\begin{equation} \label{eqn:transport_monom_coeff_final}
    c_{\bm{\alpha}}^{(i,j)} = \left( c \prod_{k=1}^d C_{\alpha_k^{(i)},\alpha_k^{(j)},\alpha_k} - \sum_{k=1}^d \hat{b}_{\kappa,k} C_{\alpha_k^{(i)},\alpha_k^{(j)},\alpha_k}' \prod_{\substack{\ell=1 \\ \ell\ne k}}^d C_{\alpha_\ell^{(i)},\alpha_\ell^{(j)},\alpha_\ell} \right) |\mathbf{J}_\kappa|.
\end{equation}

\noindent We remark that the entries of $\mathbf{A}_\kappa$ are now in a form in which Algorithm \ref{alg:monomial_integration} can be applied to generate the set of integrated monomials $\ifun(\hat{\kappa},\mathcal{J})$. Here, we make the choice
\begin{equation*}
    \mathcal{J} = \left\{ \bm{\alpha}\in\naturals^d : 0\le|\bm{\alpha}|\le 2p \right\};
\end{equation*}

\noindent this ensures that $\mathcal{J}$ contains each $\bm{\alpha}$ for which $c_{\bm{\alpha}}^{(i,j)}\ne 0$. Moreover, by \reff{eqn:element_matrix_reconstruction_formula}, the set $\ifun(\hat{\kappa},\mathcal{J})$ can be computed once for each $\kappa$ and re-used to assemble each entry of $\mathbf{A}_\kappa$.

\begin{algorithm}[t!]
\caption{Computation of $\mathbf{A}_\kappa$ via quadrature-free integration.}
\label{alg:stiffness_assembly_quadfree}
\begin{algorithmic}[1]
    \Procedure{ComputeElementMatrix}{$\kappa$}
        \State Compute sets of coefficients $\{C_{m,n,k}\}$ and $\{C_{m,n,k}'\}$ in \reff{eqn:decomp_prod_leg_poly} and \reff{eqn:decomp_prod_leg_poly_deriv} (if not already available)
        \State Map $\kappa\mapsto\hat{\kappa}$
        \State Compute $\ifun(\hat{\kappa},\mathcal{J})$ using Algorithm \ref{alg:monomial_integration}
        \State Compute $\hat{\mathbf{b}}_\kappa=\mathbf{J}_\kappa^{-1}\mathbf{b}$
        \State $\mathbf{A}_\kappa\gets0$
        \For{$i=1,\dots,\dim\mathbb{P}^p(\kappa)$}
            \For{$j=1,\dots,\dim\mathbb{P}^p(\kappa)$}
                \For{$\bm{0}\le\bm{\alpha}\le\bm{\alpha}^{(i)}+\bm{\alpha}^{(j)}$}
                    \State Compute $c_{\bm{\alpha}}^{(i,j)}$ as in \reff{eqn:transport_monom_coeff_final}
                    \State $(\mathbf{A}_\kappa)_{i,j} \gets (\mathbf{A}_\kappa)_{i,j} + c_{\bm{\alpha}}^{(i,j)} \ifun(\hat{\kappa},\bm{\alpha})$
                \EndFor
            \EndFor
        \EndFor
        \State \Return $\mathbf{A}_\kappa$
    \EndProcedure
\end{algorithmic}
\end{algorithm}

Algorithm \ref{alg:stiffness_assembly_quadfree} provides pseudocode for a typical implementation of the quadrature-free based integration method to evaluate the integrals \reff{eqn:local_matrix_component_wise_defn} appearing in the DGFEM discretisation of the transport equation. We remark that the computational complexity associated with the execution of Algorithm~\ref{alg:monomial_integration} on line 4 has already been discussed in Section \ref{section:monomial_integration_quad_free_analysis_subsec}. Noting that lines 10 and 11 of Algorithm \ref{alg:stiffness_assembly_quadfree} require $(d+1)^2$ and 2 floating-point operations, respectively, it can be seen that the number of floating-point operations performed in the main body (lines 7-14) is given by $((d+1)^2+2) Q_d(p)$, where
\begin{equation} \label{eqn:Qd_fn_quad_free_assembly}
    Q_d(p) = \sum_{0\le|\bm{\alpha}^{(i)}|\le p} \sum_{0\le|\bm{\alpha}^{(j)}|\le p} \sum_{\bm{0}\le\bm{\alpha}\le\bm{\alpha}^{(i)}+\bm{\alpha}^{(j)}} 1.
\end{equation}

\noindent It can be shown that $Q_d(p) \sim \frac{1}{(3d)!} \binom{4d}{2d} p^{3d}$ as $p\rightarrow\infty$.

\begin{remark}[Pre-computation of coefficients]\label{remark:stiffness_qfree_coeff_precompute}
    One may optionally pre-compute the ($\kappa$-independent) coefficients $C_{\bm{\alpha}^{(i)},\bm{\alpha}^{(j)},\bm{\alpha}} = \prod_{k=1}^d C_{\alpha_k^{(i)},\alpha_k^{(j)},\alpha_k}$ and $C_{\bm{\alpha}^{(i)},\bm{\alpha}^{(j)},\bm{\alpha}}^{(k)} = C_{\alpha_k^{(i)},\alpha_k^{(j)},\alpha_k}' \prod_{\substack{\ell=1\\ \ell\ne k}}^d C_{\alpha_\ell^{(i)},\alpha_\ell^{(j)},\alpha_\ell}$ for $0\le|\bm{\alpha}^{(i)}|\le p$, $0\le|\bm{\alpha}^{(j)}|\le p$, $\bm{0}\le\bm{\alpha}\le\bm{\alpha}^{(i)}+\bm{\alpha}^{(j)}$ and $1\le k\le d$. This allows \reff{eqn:transport_monom_coeff_final} to be computed using $2(d+1)$ floating-point operations, which is the same as the number of floating-point operations needed to evaluate $I$ in the quadrature based implementation given in Algorithm \ref{alg:stiffness_assembly_quadrature}.
\end{remark}

\begin{remark}[$p$-refinement in quadrature-free based assembly]\label{remark:hierarchical_assembly}
    Algorithm \ref{alg:stiffness_assembly_quadfree} requires the following sets in order to assemble the matrix $\mathbf{A}_\kappa$ using a basis of $\mathbb{P}^p(\kappa)$:
    \begin{align*}
        \mathcal{C}(p) &= \left\{ C_{i,j,k} : 0\le i\le p, 0\le j\le p, 0\le k\le i+j \right\}, \\
        \mathcal{C}'(p) &= \left\{ C_{i,j,k}' : 0\le i\le p, 0\le j\le p, 0\le k\le i+j \right\}, \\
        \ifun_\kappa(p) &= \left\{ \ifun(\facet,\bm{\alpha}) : 0\le \dim\facet\le d, 0\le| \bm{\alpha} |\le 2p \right\}.
    \end{align*}

    Suppose we wish to perform a $p$-refinement; that is, to construct $\mathbf{A}_\kappa$ using a basis of $\mathbb{P}^{p+1}(\kappa)$. Further, assume that $\mathcal{C}(p)$, $\mathcal{C}'(p)$ and $\ifun_\kappa(p)$ are already known. We have that
    \begin{align*}
        \mathcal{C}(p+1) &= \mathcal{C}(p) \cup \left\{C_{i,p,k} : 0\le i\le p+1, \ 0\le k\le p+i \right\} \\
        &\quad\quad \cup \left\{C_{p,i,k} : 0\le i\le p+1, \ 0\le k\le p+i \right\}, \\
        \mathcal{C}'(p+1) &= \mathcal{C}'(p) \cup \left\{C_{i,p,k}' : 0\le i\le p+1, \ 0\le k\le p+i \right\} \\
        &\quad\quad \cup \left\{C_{p,i,k}' : 0\le i\le p+1, \ 0\le k\le p+i \right\}, \\
        \ifun_\kappa(p+1) &= \ifun_\kappa(p) \cup \left\{ \ifun(\facet,\bm{\alpha}) : 0\le \dim\facet\le d, \ 2p+1\le|\bm{\alpha}|\le2p+2 \right\}.
    \end{align*}

    \noindent The computations of the updated coefficient sets $\mathcal{C}(p+1)$ and $\mathcal{C}'(p+1)$ are a one-time cost (since these sets may be used for every $\kappa\in\mathcal{T}$) and the computation of the updated integral set $\ifun_\kappa(p+1)$ can be performed using a modification of Algorithm \ref{alg:monomial_integration} that uses prior knowledge of $\ifun_\kappa(p)$ to avoid unnecessary re-computation.
\end{remark}

\subsection{Comparison of assembly methods}

It can be seen that the time complexities associated with the main loops in Algorithms \ref{alg:stiffness_assembly_quadrature} and \ref{alg:stiffness_assembly_quadfree}, measured as the total number of floating-point operations required to assemble $\mathbf{A}_\kappa$, are $\bigO(p^{3d})$ in the limit as $p\rightarrow\infty$. Moreover, the time complexity associated with the execution of Algorithm \ref{alg:monomial_integration}, used to compute $\ifun(\hat{\kappa},\mathcal{J})$ in Algorithm \ref{alg:stiffness_assembly_quadfree}, is $\bigO(\chi_1(\kappa) p^d)$, where $\chi_1(\polytope)$ denotes the measure of complexity of the geometry of a polytope $\polytope$ given in the statement of Theorem \ref{thm:monomial_integral_alg_complexity}. Thus, for large enough $p$, the main loops in Algorithms \ref{alg:stiffness_assembly_quadrature} and \ref{alg:stiffness_assembly_quadfree} are the most expensive contributions to the total assembly time.

A study of the loops in Algorithms \ref{alg:stiffness_assembly_quadrature} and \ref{alg:stiffness_assembly_quadfree} shows that the quadrature based method reaches the inner-most computations $(\dim\mathbb{P}^p(\kappa))^2 N$ times, while the quadrature-free based method reaches the inner-most computations $Q_d(p)$ times, where $Q_d(p)$ is the function defined in \reff{eqn:Qd_fn_quad_free_assembly}. Here, the number of quadrature points and weights $N$ used in Algorithm \ref{alg:stiffness_assembly_quadrature} is chosen to exactly evaluate the integrals $(\mathbf{A}_\kappa)_{ij}$ in \reff{eqn:local_matrix_component_wise_defn} whose integrands are polynomial functions of total degree at most $2p$. Thus, a lower bound for $N$ is given by $N_{opt} = \binom{p+d}{d}$.

\begin{figure}[t!]
	\centering
	\begin{tikzpicture}
    \begin{groupplot}[group style={group size= 2 by 2},height=5cm,width=6.4cm]
        \nextgroupplot[xmode=log, log basis x={2},
					   ymode=log, log basis y={10},
					   ytick={1e0,1e2,1e4,1e6,1e8,1e10},
					   ymin=1e0, ymax=1e10,
                      xticklabel={\pgfmathparse{2^(\tick)}\pgfmathprintnumber{\pgfmathresult}},
					   ylabel=Inner-most computations, 
					   axis background/.style={fill=gray!0}, 
					   legend pos=north west,
					   grid=both,
					   grid style={line width=.1pt, draw=gray!10},
   					   major grid style={line width=.2pt,draw=gray!50}]
           
   				\addplot+[mark=square, color=tolcol1, samples=20, 
                domain = 1:20] {((x+1)*(x+2)/2)^3}; \label{plots:flop_count_quadrature}
       
   				\addplot+[mark=o, color=tolcol2, samples=20, 
                domain = 1:20] {((x+1)*(x+2)/2)^2 * (7*x^2+21*x+18)/18}; \label{plots:flop_count_quadfree}

                \addplot+[mark=none, color=black, dashed, samples=20, domain = 1:20] {x^6/10}; \label{plots:flop_count_O_p_3d}
       
                \coordinate (top) at (rel axis cs:0,1);
        \nextgroupplot[xmode=log, log basis x={2},
			     	   ymode=log, log basis y={10},
					   ytick={1e0,1e2,1e4,1e6,1e8,1e10},
					   ymin=1e0, ymax=1e10,
                      xticklabel={\pgfmathparse{2^(\tick)}\pgfmathprintnumber{\pgfmathresult}},
					   axis background/.style={fill=gray!0}, 
					   legend pos=north west,
					   grid=both,
					   grid style={line width=.1pt, draw=gray!10},
   					   major grid style={line width=.2pt,draw=gray!50}]
           
   				\addplot+[mark=square, color=tolcol1, samples=20, 
                domain = 1:20] {((x+1)*(x+2)*(x+3)/6)^3}; 
       
   				\addplot+[mark=o, color=tolcol2, samples=20, 
                domain = 1:20] {((x+1)*(x+2)*(x+3)/6)^2 * (x+2)*(11*x^2+44*x+60)/120}; 

                \addplot+[mark=none, color=black, dashed, samples=20, domain = 1:20] {x^9/500};
                
        \nextgroupplot[xmode=log, log basis x={2},
			     	   ymode=log, log basis y={10},
					   ytick={1e0,1e2,1e4,1e6,1e8,1e10},
					   ymin=1e0, ymax=1e11,
					   xlabel=$p$, 
                      xticklabel={\pgfmathparse{2^(\tick)}\pgfmathprintnumber{\pgfmathresult}},
					   ylabel=Floating-point operations, 
					   axis background/.style={fill=gray!0}, 
					   legend pos=north west,
					   grid=both,
					   grid style={line width=.1pt, draw=gray!10},
   					   major grid style={line width=.2pt,draw=gray!50}]
           
   				\addplot+[mark=square, color=tolcol1, samples=20, 
                domain = 1:20] {6*((x+1)*(x+2)/2)^3}; 
       
   				\addplot+[mark=o, color=tolcol2, samples=20, 
                domain = 1:20] {11*((x+1)*(x+2)/2)^2 * (7*x^2+21*x+18)/18}; 

                \addplot+[mark=none, color=black, dashed, samples=20, domain = 1:20] {x^6/5};
        \nextgroupplot[xmode=log, log basis x={2},
					   ymode=log, log basis y={10},
					   ytick={1e0,1e2,1e4,1e6,1e8,1e10},
					   ymin=1e0, ymax=1e11,
					   xlabel=$p$, 
                      xticklabel={\pgfmathparse{2^(\tick)}\pgfmathprintnumber{\pgfmathresult}},
					   axis background/.style={fill=gray!0}, 
					   legend pos=north west,
					   grid=both,
					   grid style={line width=.1pt, draw=gray!10},
   					   major grid style={line width=.2pt,draw=gray!50}]
           
   				\addplot+[mark=square, color=tolcol1, samples=20, 
                domain = 1:20] {8*((x+1)*(x+2)*(x+3)/6)^3}; 
       
   				\addplot+[mark=o, color=tolcol2, samples=20, 
                domain = 1:20] {18*((x+1)*(x+2)*(x+3)/6)^2 * (x+2)*(11*x^2+44*x+60)/120}; 

                \addplot+[mark=none, color=black, dashed, samples=20, domain = 1:20] {x^9/50};
       
                \coordinate (bot) at (rel axis cs:1,0);
    \end{groupplot}
    
	\path (top|-current bounding box.north)--
    	coordinate(legendpos)
    	(bot|-current bounding box.north);
	\matrix[
    	matrix of nodes,
    	anchor=south,
    	draw,
    	inner sep=0.2em,
    	draw
  	]at([yshift=1ex]legendpos)
  	{
    	\ref{plots:flop_count_quadrature}& Quadrature (Alg. \ref{alg:stiffness_assembly_quadrature}) &[5pt]
    	\ref{plots:flop_count_quadfree}& Quadrature-free (Alg. \ref{alg:stiffness_assembly_quadfree}) &[5pt]
    	\ref{plots:flop_count_O_p_3d}& $O(p^{3d})$ \\
    };
	\end{tikzpicture}
	\caption{Time complexities of the main loops in Algorithms \ref{alg:stiffness_assembly_quadrature} and \ref{alg:stiffness_assembly_quadfree} as a function of the degree of approximation $p$. It is assumed that $N=N_{opt}$ quadrature points are used in Algorithm \ref{alg:stiffness_assembly_quadrature}. Top row: number of times the operations within the main loops of Algorithms \ref{alg:stiffness_assembly_quadrature} and \ref{alg:stiffness_assembly_quadfree} are executed. Bottom row: total number of floating-point operations computed within the main loops of Algorithms \ref{alg:stiffness_assembly_quadrature} and \ref{alg:stiffness_assembly_quadfree}. Left column: $d=2$. Right column: $d=3$.}
	\label{fig:flop_count_quadrature_vs_quadfree}
\end{figure}
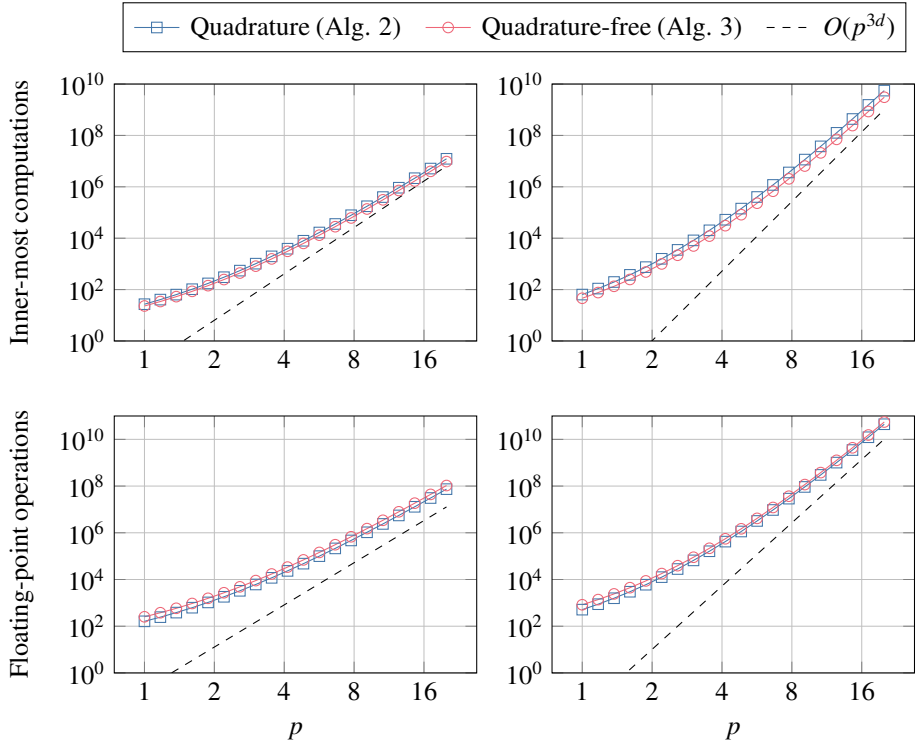

Figure \ref{fig:flop_count_quadrature_vs_quadfree} shows the expected number of floating-point operations required by Algorithm~\ref{alg:stiffness_assembly_quadrature} (using the theoretically-optimal number of quadrature points $N_{opt}$) and Algorithm~\ref{alg:stiffness_assembly_quadfree} used to assemble $\mathbf{A}_\kappa$. The leading-order behaviour of both algorithms in the limit $p\rightarrow\infty$ is $O(p^{3d})$. It is seen that, without taking the geometric complexity of $\kappa$ into consideration, the performance of the quadrature-free based assembly method is expected to be comparable to that of the quadrature based assembly employing the minimal quadrature set that exactly evaluates $\mathbf{A}_\kappa$.

However, the quadrature-free based assembly method can outperform the quadrature based assembly method when one takes into consideration the geometric complexity of $\kappa$. To see this, suppose that $\kappa$ is decomposed into $n$ subdomains for the purpose of numerical integration, on each of which an $N_{opt}$-point quadrature scheme can be applied. The leading-order behaviour of Algorithm \ref{alg:stiffness_assembly_quadrature} is $\bigO(np^{3d})$; that is, the time taken to assemble $\mathbf{A}_\kappa$ via the quadrature based assembly method will increase significantly if many integration subdomains are required. In contrast, the time taken to execute the main loop of Algorithm \ref{alg:stiffness_assembly_quadfree} is independent of $\kappa$; furthermore, taking into account the evaluation of the set of integrated monomials $\ifun(\hat{\kappa},\mathcal{J})$, the leading-order behaviour of Algorithm \ref{alg:stiffness_assembly_quadfree} is $\bigO(\chi_1(\hat{\kappa})p^d + p^{3d})$, where $\chi_1$ denotes the function given in the statement of Theorem \ref{thm:monomial_integral_alg_complexity}. We investigate this further in the following section.

\section{Numerical results} \label{sec:numerics}

In this section we consider the practical performance of the proposed quadrature-free algorithm.

\subsection{Effect of pruning in Algorithm \ref{alg:monomial_integration}}

\begin{table}[t]
    \centering
    \begin{tabular}{cc|ccccc}
        \multicolumn{2}{c|}{$|\mathcal{T}|$} & 32 & 128 & 512 & 2048 & 8192 \\ \hline
        \multirow{3}{*}{$p=0$} & Unpruned & 1.8926E-05 & 6.2850E-05 & 2.3568E-04 & 8.8175E-04 &3.6211E-03 \\
        & Pruned & 2.4888E-05 & 7.3720E-05 & 2.7315E-04 & 1.0641E-03 & 4.5114E-03 \\ 
        & Ratio & 1.32 & 1.17 & 1.16 & 1.21 & 1.25 \\ \hline
        \multirow{3}{*}{$p=2$} & Unpruned & 3.3402E-05 & 8.2150E-05 & 3.3563E-04 & 1.3032E-03 & 5.1413E-03 \\
        & Pruned & 2.8993E-05 & 8.0898E-05 & 3.1069E-04 & 1.2043E-03 & 4.7641E-03 \\ 
        & Ratio & 0.87 & 0.98 & 0.93 & 0.92 & 0.93 \\ \hline
        \multirow{3}{*}{$p=4$} & Unpruned & 3.3678E-05 & 1.2722E-04 & 5.1513E-04 & 1.7583E-03 & 6.4381E-03 \\
        & Pruned & 3.1136E-05 & 9.4751E-05 & 3.6298E-04 & 1.5314E-03 & 5.5922E-03 \\ 
        & Ratio & 0.92 & 0.74 & 0.70 & 0.87 & 0.87 \\ \hline
        \multirow{3}{*}{$p=6$} & Unpruned & 4.4647E-05 & 1.6264E-04 & 6.3257E-04 & 2.6062E-03 & 9.5336E-03 \\
        & Pruned & 3.8436E-05 & 1.2308E-04 & 5.2076E-04 & 1.9159E-03 & 7.2723E-03 \\ 
        & Ratio & 0.86 & 0.76 & 0.82 & 0.74 & 0.76 \\ \hline
        \multirow{3}{*}{$p=8$} & Unpruned & 6.4398E-05 & 2.2496E-04 & 9.3172E-04 & 3.2624E-03 & 1.2828E-02 \\
        & Pruned & 4.5497E-05 & 1.7977E-04 & 7.8489E-04 & 2.5205E-03 & 9.4220E-03 \\ 
        & Ratio & 0.71 & 0.80 & 0.84 & 0.77 & 0.73 \\ \hline
        \multirow{3}{*}{$p=10$} & Unpruned & 7.9538E-05 & 3.2234E-04 & 1.1563E-03 & 4.5004E-03 & 1.7702E-02 \\
        & Pruned & 5.6676E-05 & 2.2770E-04 & 7.9667E-04 & 3.0367E-03 & 1.2133E-02 \\ 
        & Ratio & 0.71 & 0.71 & 0.69 & 0.67 & 0.69 \\ \hline
        \multirow{3}{*}{$p=12$} & Unpruned & 1.3727E-04 & 4.1301E-04 & 1.6069E-03 & 5.7918E-03 & 2.3696E-02 \\
        & Pruned & 6.9956E-05 & 2.7148E-04 & 9.6113E-04 & 3.7934E-03 & 1.5079E-02 \\ 
        & Ratio & 0.51 & 0.66 & 0.60 & 0.65 & 0.64 \\ \hline
    \end{tabular}
    \caption{CPU times of unpruned and pruned versions of Algorithm \ref{alg:monomial_integration} for the assembly of $\bigcup_{\kappa\in\mathcal{T}}\ifun(\kappa,\mathcal{J})$ for triangular meshes $\mathcal{T}$ in two spatial dimensions.}
    \label{tab:transport_assembly_monom_int_pruning_tets_2d}
\end{table}

\begin{table}[t]
    \centering
    \begin{tabular}{cc|ccccc}
        \multicolumn{2}{c|}{$|\mathcal{T}|$} & 12 & 51 & 204 & 819 & 3276 \\ \hline
        \multirow{3}{*}{$p=0$} & Unpruned & 2.3893E-05 & 6.7615E-05 & 2.7051E-04 & 1.1335E-03 & 5.0569E-03 \\
        & Pruned & 3.1631E-05 & 8.4488E-05 & 3.1938E-04 & 1.3849E-03 & 6.2978E-03 \\ 
        & Ratio & 1.32 & 1.25 & 1.18 & 1.22 & 1.25 \\ \hline
        \multirow{3}{*}{$p=2$} & Unpruned & 2.9957E-05 & 1.0833E-04 & 3.4219E-04 & 1.6801E-03 & 6.6029E-03 \\
        & Pruned & 3.3662E-05 & 1.1878E-04 & 3.8369E-04 & 1.8276E-03 & 7.3026E-03 \\ 
        & Ratio & 1.12 & 1.10 & 1.12 & 1.09 & 1.11 \\ \hline
        \multirow{3}{*}{$p=4$} & Unpruned & 4.6595E-05 & 1.4459E-04 & 5.9718E-04 & 2.2960E-03 & 9.4577E-03 \\
        & Pruned & 3.9385E-05 & 1.2878E-04 & 5.9189E-04 & 2.2926E-03 & 9.2390E-03 \\ 
        & Ratio & 0.85 & 0.89 & 0.99 & 1.00 & 0.98 \\ \hline
        \multirow{3}{*}{$p=6$} & Unpruned & 5.6086E-05 & 2.1821E-04 & 9.3389E-04 & 3.2810E-03 & 1.2832E-02 \\
        & Pruned & 5.5423E-05 & 1.9397E-04 & 8.2830E-04 & 2.9239E-03 & 1.1699E-02 \\ 
        & Ratio & 0.99 & 0.89 & 0.89 & 0.89 & 0.91 \\ \hline
        \multirow{3}{*}{$p=8$} & Unpruned & 8.5570E-05 & 3.0688E-04 & 1.1368E-03 & 4.5906E-03 & 1.7794E-02 \\
        & Pruned & 8.1338E-05 & 2.5567E-04 & 9.8448E-04 & 3.8316E-03 & 1.5385E-02 \\ 
        & Ratio & 0.95 & 0.83 & 0.87 & 0.83 & 0.86 \\ \hline
        \multirow{3}{*}{$p=10$} & Unpruned & 1.2774E-04 & 4.2015E-04 & 1.5151E-03 & 6.0202E-03 & 2.4115E-02 \\
        & Pruned & 8.7737E-05 & 3.4614E-04 & 1.2627E-03 & 5.0212E-03 & 1.9915E-02 \\ 
        & Ratio & 0.69 & 0.82 & 0.83 & 0.83 & 0.83 \\ \hline
        \multirow{3}{*}{$p=12$} & Unpruned & 1.5864E-04 & 5.3115E-04 & 2.0558E-03 & 8.2953E-03 & 3.3196E-02 \\
        & Pruned & 1.4783E-04 & 4.3479E-04 & 1.7454E-03 & 6.7921E-03 & 2.5964E-02 \\ 
        & Ratio & 0.93 & 0.82 & 0.85 & 0.82 & 0.78 \\ \hline
    \end{tabular}
    \caption{CPU times of unpruned and pruned versions of Algorithm \ref{alg:monomial_integration} for the assembly of $\bigcup_{\kappa\in\mathcal{T}}\ifun(\kappa,\mathcal{J})$ for agglomerated triangular meshes $\mathcal{T}$ in two spatial dimensions.}
    \label{tab:transport_assembly_monom_int_pruning_agglomtets_2d}
\end{table}

\begin{table}[h!]
    \centering
    \begin{tabular}{cc|ccccc}
        %
        \multicolumn{2}{c|}{$|\mathcal{T}|$} & 6 & 48 & 384 & 3072 & 24576 \\ \hline
        \multirow{3}{*}{$p=0$} & Unpruned & 2.4926E-05 & 1.0816E-04 & 8.1476E-04 & 6.8287E-03 & 4.8521E-02 \\
        & Pruned & 1.9337E-05 & 7.1661E-05 & 5.1822E-04 & 4.0497E-03 & 3.1205E-02 \\ 
        & Ratio & 0.78 & 0.66 & 0.64 & 0.59 & 0.64 \\ \hline
        \multirow{3}{*}{$p=2$} & Unpruned & 3.5647E-05 & 2.0676E-04 & 1.5616E-03 & 1.3476E-02 & 9.5595E-02 \\
        & Pruned & 2.1014E-05 & 9.8906E-05 & 7.1403E-04 & 5.6071E-03 & 4.3727E-02 \\
        & Ratio & 0.59 & 0.48 & 0.46 & 0.42 & 0.46 \\ \hline
        \multirow{3}{*}{$p=4$} & Unpruned & 6.3102E-05 & 4.2202E-04 & 3.2991E-03 & 2.5008E-02 & 1.9898E-01 \\
        & Pruned & 3.1368E-05 & 1.5550E-04 & 1.2420E-03 & 9.1364E-03 & 7.2012E-02 \\
        & Ratio & 0.50 & 0.37 & 0.38 & 0.37 & 0.36 \\ \hline
        \multirow{3}{*}{$p=6$} & Unpruned & 1.1175E-04 & 8.2232E-04 & 5.9819E-03 & 4.7363E-02 & 3.7750E-01 \\
        & Pruned & 4.4362E-05 & 2.5484E-04 & 1.8026E-03 & 1.3952E-02 & 1.1175E-01 \\
        & Ratio & 0.40 & 0.31 & 0.30 & 0.29 & 0.30 \\ \hline
        \multirow{3}{*}{$p=8$} & Unpruned & 1.8889E-04 & 1.4290E-03 & 1.0750E-02 & 8.6211E-02 & 6.8734E-01 \\
        & Pruned & 6.2170E-05 & 2.8795E-04 & 3.0248E-03 & 2.3477E-02 & 1.8765E-01 \\
        & Ratio & 0.33 & 0.20 & 0.28 & 0.27 & 0.27 \\
        %
    \end{tabular}
    \caption{CPU times of unpruned and pruned versions of Algorithm \ref{alg:monomial_integration} for the assembly of $\bigcup_{\kappa\in\mathcal{T}}\ifun(\kappa,\mathcal{J})$ for tetrahedral meshes $\mathcal{T}$ in three spatial dimensions.}
    \label{tab:transport_assembly_monom_int_pruning_tets_3d}
\end{table}

\begin{table}[h!]
    \centering
    \begin{tabular}{cc|ccccc}
        %
        \multicolumn{2}{c|}{$|\mathcal{T}|$} & 4 & 38 & 307 & 2457 & 19660 \\ \hline
        \multirow{3}{*}{$p=0$} & Unpruned & 4.5272E-05 & 3.5574E-04 & 2.6056E-03 & 2.1919E-02 & 1.7550E-01 \\
        & Pruned & 3.7088E-05 & 2.8895E-04 & 2.0201E-03 & 1.6717E-02 & 1.3413E-01 \\ 
        & Ratio & 0.82 & 0.81 & 0.78 & 0.76 & 0.76 \\ \hline
        \multirow{3}{*}{$p=2$} & Unpruned & 8.7692E-05 & 7.7001E-04 & 5.5442E-03 & 4.3656E-02 & 3.5131E-01 \\
        & Pruned & 5.4265E-05 & 4.5130E-04 & 3.1994E-03 & 2.6491E-02 & 2.0647E-01 \\ 
        & Ratio & 0.62 & 0.59 & 0.58 & 0.61 & 0.59 \\ \hline
        \multirow{3}{*}{$p=4$} & Unpruned & 1.9519E-04 & 1.5893E-03 & 1.1656E-02 & 9.1985E-02 & 7.4515E-01 \\
        & Pruned & 9.1346E-05 & 7.6676E-04 & 5.5029E-03 & 4.4103E-02 & 3.5562E-01 \\ 
        & Ratio & 0.47 & 0.48 & 0.47 & 0.48 & 0.48 \\ \hline
        \multirow{3}{*}{$p=6$} & Unpruned & 3.4222E-04 & 3.0026E-03 & 2.3204E-02 & 1.8303E-01 & 1.4867E+00 \\
        & Pruned & 1.6222E-04 & 1.2524E-03 & 9.9395E-03 & 7.9390E-02 & 6.4008E-01 \\ 
        & Ratio & 0.47 & 0.42 & 0.43 & 0.43 & 0.43 \\ \hline
        \multirow{3}{*}{$p=8$} & Unpruned & 6.0944E-04 & 5.2648E-03 & 4.1087E-02 & 3.2613E-01 & 2.6380E+00 \\
        & Pruned & 2.4756E-04 & 1.9114E-03 & 1.5453E-02 & 1.2295E-01 & 1.0089E+00 \\ 
        & Ratio & 0.41 & 0.36 & 0.38 & 0.38 & 0.38 \\
        %
    \end{tabular}
    \caption{CPU times of unpruned and pruned versions of Algorithm \ref{alg:monomial_integration} for the assembly of $\bigcup_{\kappa\in\mathcal{T}}\ifun(\kappa,\mathcal{J})$ for agglomerated tetrahedral meshes $\mathcal{T}$ in three spatial dimensions.}
    \label{tab:transport_assembly_monom_int_pruning_agglomtets_3d}
\end{table}

We shall first study the effect of implementing Algorithm \ref{alg:monomial_integration} with and without pruning as described in Section \ref{section:monomial_integration_quad_free_analysis_subsec}. The pruning strategy we adopt is to select the reference point $\x_\facet$ as the first vertex of $\facet$ for each face visited by Algorithm \ref{alg:monomial_integration}. We will apply Algorithm \ref{alg:monomial_integration} to the problem of assembling the integral sets $\bigcup_{\kappa\in\mathcal{T}}\ifun(\kappa,\mathcal{J})$ in the case where $\mathcal{T}$ is a simplicial or agglomerated simplicial mesh in two or three spatial dimensions and
\begin{equation*}
    \mathcal{J} = \left\{ \bm{\alpha}\in\naturals^d : 0\le|\bm{\alpha}|\le p \right\}
\end{equation*}

\noindent for $p\in\{0,2,4,6,8,10,12\}$.

Tables~\ref{tab:transport_assembly_monom_int_pruning_tets_2d} and~\ref{tab:transport_assembly_monom_int_pruning_agglomtets_2d} show the total CPU time taken by the unpruned and pruned versions of Algorithm \ref{alg:monomial_integration} applied to two-dimensional triangular and agglomerated triangular meshes, respectively, while Tables~\ref{tab:transport_assembly_monom_int_pruning_tets_3d} and~\ref{tab:transport_assembly_monom_int_pruning_agglomtets_3d} show the total CPU time taken by the unpruned and pruned versions of Algorithm \ref{alg:monomial_integration} applied to three-dimensional tetrahedral and agglomerated tetrahedral meshes, respectively. An additional quantity, computed as the ratio of the CPU time taken by the pruned algorithm against the unpruned algorithm, is also reported; values of this ratio less than 1 indicate that Algorithm \ref{alg:monomial_integration} with pruning computes the integral set $\ifun(\polytope,\mathcal{J})$ faster than the same algorithm without pruning.

For each fixed $p$, it is observed that the ratio of CPU time taken by the pruned algorithm against the unpruned algorithm remains roughly constant in all cases. For a fixed number of elements, this ratio decreases as $p$ increases. It is expected that this ratio continues to decrease as $p\rightarrow\infty$ but remains bounded from below by a constant - for a single element $\kappa$, this constant is expected to depend on the number of nodes and edges of the graph $G(\kappa)$ defined in Section \ref{section:monomial_integration_quad_free_analysis_subsec} before and after pruning.

While pruning accelerates the assembly of $\bigcup_{\kappa\in\mathcal{T}}\ifun(\kappa,\mathcal{J})$ for both tetrahedral and agglomerated tetrahedral elements in three dimensions, a greater improvement in assembly time is observed for tetrahedral meshes - this is because a greater proportion of the nodes and edges of $G(\kappa)$ can be eliminated through pruning when $\kappa$ is simplicial. We observe similar behaviour between the pruned and unpruned version of Algorithm~\ref{alg:monomial_integration} applied to triangular and agglomerated triangular elements in two dimensions, though pruning is seen to be less effective at reducing the CPU time spent assembling $\bigcup_{\kappa\in\mathcal{T}} \ifun(\kappa,\mathcal{J})$. For small values of $p$, pruning actually slows down Algorithm \ref{alg:monomial_integration} - we speculate that this is because the extra time spent checking whether a given facet $\facet$ is to be pruned outweighs the expense that would be incurred to assemble $\ifun(\facet,\mathcal{J})$ without pruning.

\subsection{Integrating monomials over polygons}

As a second example, we compare the quadrature and \linebreak quadrature-free based integration algorithms for the evaluation of the sets
\begin{equation*}
    \ifun_{n,p} = \left\{ \int_{\polytope_n} \mathbf{x}^{\bm{\alpha}} \d\mathbf{x} : \bm{\alpha}\in\naturals^2, 0\le|\bm{\alpha}|\le p \right\},
\end{equation*}

\noindent where $\polytope_n\subset\reals^2$ denotes the regular $n$-gon, $5\le n\le 16$, with vertices $\left\{ \left( \cos\frac{2\pi k}{n},\sin\frac{2\pi k}{n} \right) \right\}_{k=0}^{n-1}$ and $p\in\{2,4,8,16,32\}$.

The quadrature based method is employed as follows: a sub-tessellation of $\polytope_n$ consisting of $(n-2)$ triangles is constructed by joining the first vertex of $\polytope_n$ to every other vertex. On each triangle, a $(q+1)^2$-point quadrature scheme, $q=\lceil \frac{p+1}{2} \rceil$, is defined by constructing a quadrature scheme on the unit square $(-1,1)^2$ exactly integrating all bivariate polynomial functions of maximal degree $p+1$. The reference quadrature scheme on $(-1,1)^2$ is then mapped to each triangle in the sub-tessellation of $\polytope_n$ via a Duffy transformation \cite{duffy1982quadrature}. The resulting quadrature scheme on $\polytope_n$ therefore contains $(n-2)(q+1)^2$ points and weights. We record the time taken for the quadrature based integration method to be executed for each element of $\ifun_{n,p}$; here, we do not include the time taken to generate the quadrature scheme on $\polytope_n$.

The quadrature-free based method is tailored to the two-dimensional setting. The integrals $\ifun(\polytope_n,\bm{\alpha})$ and $\ifun(\facet_{n,k},\bm{\alpha})$, for each boundary facet $\facet_{n,k}\subset\partial\polytope_n$, $1\le k\le n$, are stored in two arrays. We employ pruning based on selecting $\x_\facet$ as the first vertex of each visited face $\facet$.

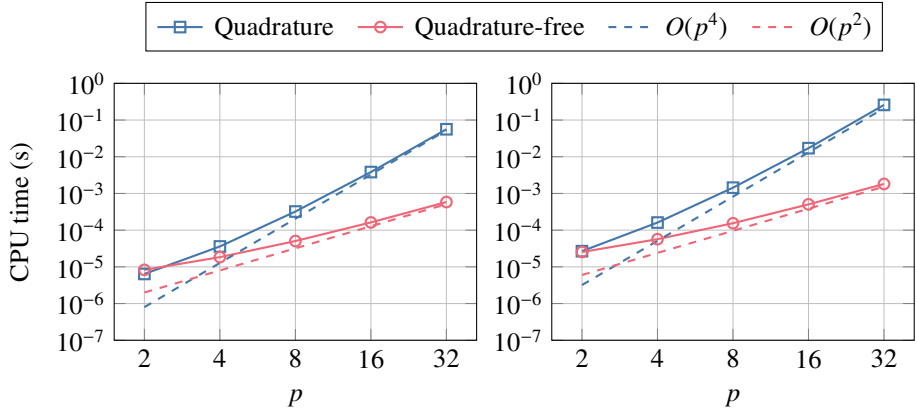
\begin{figure}[t!]
	\centering
	\begin{tikzpicture}
    \begin{groupplot}[group style={group size= 2 by 1},height=5cm,width=6.4cm]
        \nextgroupplot[xmode=log,
					   log basis x={2},
					   xticklabel={\pgfmathparse{2^(\tick)}\pgfmathprintnumber{\pgfmathresult}},
					   ymode=log,
					   log basis y={10},
					   ytick={1e-7,1e-6,1e-5,1e-4,1e-3,1e-2,1e-1,1e0},
					   ymin=1e-7, ymax=1e0,
					   xlabel=$p$, 
					   ylabel=CPU time (s), 
					   axis background/.style={fill=gray!0}, 
					   legend pos=north west,
					   grid=both,
					   grid style={line width=.1pt, draw=gray!10},
   					   major grid style={line width=.2pt,draw=gray!50}]
   				\addplot+[mark=square, thick, tolcol1, mark options={tolcol1, solid}] table [x=p, y=t_quad, col sep=comma] {data/monomial_times_2d/monomial_timings_n05.csv}; \label{plots:monomial_times_x_poly_n_5_quad_2d}
   				\addplot+[mark=o, thick, tolcol2, mark options={tolcol2, solid}] table [x=p, y=t_qfree, col sep=comma] {data/monomial_times_2d/monomial_timings_n05.csv}; \label{plots:monomial_times_x_poly_n_5_qfree_2d}
   				\addplot+[mark=none, tolcol1, dashed, thick, domain=2:32] {5e-8*x^4}; \label{plots:monomial_times_x_poly_Op4_ref_2d}
   				\addplot+[mark=none, tolcol2, dashed, thick, domain=2:32] {5e-7*x^2}; \label{plots:monomial_times_x_poly_Op2_ref_2d}
                \coordinate (top) at (rel axis cs:0,1);
        \nextgroupplot[xmode=log,
					   log basis x={2},
					   xticklabel={\pgfmathparse{2^(\tick)}\pgfmathprintnumber{\pgfmathresult}},
					   ymode=log,
					   log basis y={10},
					   ytick={1e-7,1e-6,1e-5,1e-4,1e-3,1e-2,1e-1,1e0},
					   ymin=1e-7, ymax=1e0,
					   xlabel=$p$, 
					   axis background/.style={fill=gray!0}, 
					   legend pos=north west,
					   grid=both,
					   grid style={line width=.1pt, draw=gray!10},
   					   major grid style={line width=.2pt,draw=gray!50}]
   					   minor grid style={line width=.1pt,draw=gray!10}]
   				\addplot+[mark=square, thick, tolcol1, mark options={tolcol1, solid}] table [x=p, y=t_quad, col sep=comma] {data/monomial_times_2d/monomial_timings_n16.csv};
   				\addplot+[mark=o, thick, tolcol2, mark options={tolcol2, solid}] table [x=p, y=t_qfree, col sep=comma] {data/monomial_times_2d/monomial_timings_n16.csv};
   				\addplot+[mark=none, tolcol1, dashed, thick, domain=2:32] {2e-7*x^4};
   				\addplot+[mark=none, tolcol2, dashed, thick, domain=2:32] {1.5e-6*x^2};
                \coordinate (bot) at (rel axis cs:1,0);
    \end{groupplot}
	\path (top|-current bounding box.north)--
    	coordinate(legendpos)
    	(bot|-current bounding box.north);
	\matrix[
    	matrix of nodes,
    	anchor=south,
    	draw,
    	inner sep=0.2em,
    	draw
  	]at([yshift=1ex]legendpos)
  	{
    	\ref{plots:monomial_times_x_poly_n_5_quad_2d}& Quadrature &[5pt]
    	\ref{plots:monomial_times_x_poly_n_5_qfree_2d}& Quadrature-free &[5pt]
    	\ref{plots:monomial_times_x_poly_Op4_ref_2d}& $O(p^4)$ &[5pt]
    	\ref{plots:monomial_times_x_poly_Op2_ref_2d}& $O(p^2)$ \\};
	\end{tikzpicture}
	\caption{CPU times taken by the quadrature-based and quadrature-free-based methods to evaluate $\ifun_{n,p}$ for $p=2,4,8,16,32$ on a regular $n$-gon. Left: $n=5$. Right: $n=16$.}
	\label{fig:monomial_times_x_poly_2d}
\end{figure}

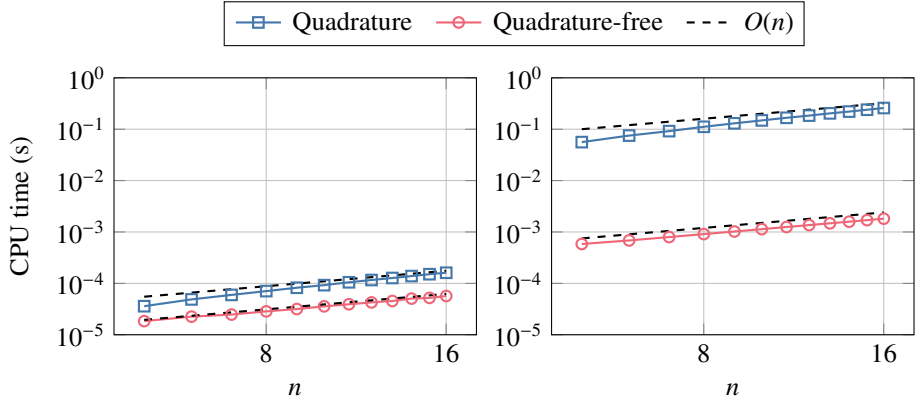
\begin{figure}[t!]
	\centering
	\begin{tikzpicture}
    \begin{groupplot}[group style={group size= 2 by 1},height=5cm,width=6.4cm]
        \nextgroupplot[xmode=log,
					   log basis x={2},
					   xticklabel={\pgfmathparse{2^(\tick)}\pgfmathprintnumber{\pgfmathresult}},
					   ymode=log,
					   log basis y={10},
					   ytick={1e-5,1e-4,1e-3,1e-2,1e-1,1e0},
					   ymin=1e-5, ymax=1e0,
					   xlabel=$n$, 
					   ylabel=CPU time (s), 
					   axis background/.style={fill=gray!0}, 
					   legend pos=north west,
					   grid=both,
					   grid style={line width=.1pt, draw=gray!10},
   					   major grid style={line width=.2pt,draw=gray!50}]
   				\addplot+[mark=square, thick, tolcol1, mark options={tolcol1, solid}] table [x=n, y=t_quad, col sep=comma] {data/monomial_times_2d/monomial_timings_p04.csv}; \label{plots:monomial_times_x_poly_p_4_quad_2d}
   				\addplot+[mark=o, thick, tolcol2, mark options={tolcol2, solid}] table [x=n, y=t_qfree, col sep=comma] {data/monomial_times_2d/monomial_timings_p04.csv}; \label{plots:monomial_times_x_poly_p_4_qfree_2d}
   				\addplot+[mark=none, black, dashed, thick, domain=5:16] {1.1e-5*x}; \label{plots:monomial_times_x_poly_On_ref_2d}
   				\addplot+[mark=none, black, dashed, thick, domain=5:16] {3.9e-6*x};
                \coordinate (top) at (rel axis cs:0,1);
        \nextgroupplot[xmode=log,
					   log basis x={2},
					   xticklabel={\pgfmathparse{2^(\tick)}\pgfmathprintnumber{\pgfmathresult}},
					   ymode=log,
					   log basis y={10},
					   ytick={1e-5,1e-4,1e-3,1e-2,1e-1,1e0},
					   ymin=1e-5, ymax=1e0,
					   xlabel=$n$, 
					   axis background/.style={fill=gray!0}, 
					   legend pos=north west,
					   grid=both,
					   grid style={line width=.1pt, draw=gray!10},
   					   major grid style={line width=.2pt,draw=gray!50}]
   					   minor grid style={line width=.1pt,draw=gray!10}]
   				\addplot+[mark=square, thick, tolcol1, mark options={tolcol1, solid}] table [x=n, y=t_quad, col sep=comma] {data/monomial_times_2d/monomial_timings_p32.csv};
   				\addplot+[mark=o, thick, tolcol2, mark options={tolcol2, solid}] table [x=n, y=t_qfree, col sep=comma] {data/monomial_times_2d/monomial_timings_p32.csv};
   				\addplot+[mark=none, black, dashed, thick, domain=5:16] {2e-2*x};
   				\addplot+[mark=none, black, dashed, thick, domain=5:16] {1.5e-4*x};
                \coordinate (bot) at (rel axis cs:1,0);
    \end{groupplot}
	\path (top|-current bounding box.north)--
    	coordinate(legendpos)
    	(bot|-current bounding box.north);
	\matrix[
    	matrix of nodes,
    	anchor=south,
    	draw,
    	inner sep=0.2em,
    	draw
  	]at([yshift=1ex]legendpos)
  	{
    	\ref{plots:monomial_times_x_poly_p_4_quad_2d}& Quadrature &[5pt]
    	\ref{plots:monomial_times_x_poly_p_4_qfree_2d}& Quadrature-free &[5pt]
    	\ref{plots:monomial_times_x_poly_On_ref_2d}& $O(n)$ \\};
	\end{tikzpicture}
	\caption{CPU times taken by the quadrature-based and quadrature-free-based methods to evaluate $\ifun_{n,p}$ for $5\le n\le 16$ and fixed $p$. Left: $p=4$. Right: $p=32$.}
	\label{fig:monomial_times_x_ngon_2d}
\end{figure}

Figures \ref{fig:monomial_times_x_poly_2d} and \ref{fig:monomial_times_x_ngon_2d} show the CPU times taken by the quadrature based and quadrature-free based integration algorithms to evaluate $\ifun_{n,p}$ averaged over 100 function calls. The time complexity of the quadrature based method is $\bigO(np^4)$, since the size of the requested set of integrals $|\ifun_{n,p}|=\bigO(p^2)$ and each integral requires $\bigO(n(q+1)^2)=\bigO(np^2)$ flops to compute. On the other hand, the quadrature-free based method is seen to have time complexity $\bigO(np^2)$. This is consistent with Theorem \ref{thm:monomial_integral_alg_complexity_d_2_3} since $|J|=\bigO(p^2)$. It has been verified that the integrals in the set $\ifun_{n,p}$ computed using Algorithm \ref{alg:monomial_integration} agree with the same set of integrals computed using quadrature to within machine precision.

\subsection{2D transport matrix assembly}

As a third example, we compare the quadrature based and quadrature-free based assembly methods for a single element matrix arising from the DGFEM discretisation of linear first-order transport problems posed in two spatial dimensions. We first consider a single $n$-sided polygonal domain $\polytope_n\subset\reals^2$ with $3\le n\le 64$ and employ a basis of degree $p=4$, i.e., the finite element space is $\mathbb{V} = \mathbb{P}^4(\polytope_n)$.

As before, the quadrature based method employs a sub-tessellation for the purposes of constructing a quadrature scheme on $\polytope_n$. In this example, the sub-tessellation consists of $n$ triangles constructed by joining each vertex of $\polytope_n$ to the centroid. Here, a $(p+2)^2$-point quadrature scheme is employed on each triangle using the method outlined in the previous example; this ensures that the quadrature scheme exactly evaluates the element integrals appearing in \reff{eqn:dgfem_linear_transport}. As before, we do not include the time taken to generate the quadrature scheme on $\polytope_n$. The time taken to evaluate the basis functions at the quadrature points is also excluded.

\begin{figure}[t!]
	\centering
	\begin{tikzpicture}
    \begin{groupplot}[group style={group size= 2 by 1},height=5cm,width=6.4cm]
        \nextgroupplot[xmode=log,
					   log basis x={2},
					   xticklabel={\pgfmathparse{2^(\tick)}\pgfmathprintnumber{\pgfmathresult}},
					   ymode=log,
					   log basis y={10},
					   ytick={1e-7,1e-6,1e-5,1e-4,1e-3,1e-2},
					   ymin=1e-7, ymax=1e-2,
					   xlabel=$n$, 
					   ylabel=CPU time (s), 
					   axis background/.style={fill=gray!0}, 
					   legend pos=north west,
					   grid=both,
					   grid style={line width=.1pt, draw=gray!10},
   					   major grid style={line width=.2pt,draw=gray!50}]
           
                \addplot+[mark=square, thick, tolcol1, mark options={tolcol1, solid}] table [x=n_sides, y=volume_assembly_timer, col sep=comma] {data/transport_assembly_times_2d/transport_assembly_2d_quadrature_64_p_4.csv}; \label{plots:transport_times_2d_quadrature}
       
                \addplot+[mark=o, thick, tolcol2, mark options={tolcol2, solid}] table [x=n_sides, y=volume_assembly_timer, col sep=comma] {data/transport_assembly_times_2d/transport_assembly_2d_qfree_pruned_64_p_4.csv}; \label{plots:transport_times_2d_qfree_full}
       
   				\addplot+[mark=none, black, dashed, thick, domain=3:64] {1.e-4*x/4}; \label{plots:transport_times_2d_On}
       
                \coordinate (top) at (rel axis cs:0,1);
       
        \nextgroupplot[xmode=log,
					   log basis x={2},
					   xticklabel={\pgfmathparse{2^(\tick)}\pgfmathprintnumber{\pgfmathresult}},
					   ymode=log,
					   log basis y={10},
					   ytick={1e-7,1e-6,1e-5,1e-4,1e-3,1e-2},
					   ymin=1e-7, ymax=1e-2,
					   xlabel=$n$, 
					   axis background/.style={fill=gray!0}, 
					   legend pos=north west,
					   grid=both,
					   grid style={line width=.1pt, draw=gray!10},
   					   major grid style={line width=.2pt,draw=gray!50}]
   					   minor grid style={line width=.1pt,draw=gray!10}]
           
                \addplot+[mark=square, thick, tolcol1, mark options={tolcol1, solid}] table [x=n_sides, y=volume_assembly_timer, col sep=comma] {data/transport_assembly_times_2d/transport_assembly_2d_quadrature_64_p_4.csv}; 
       
                \addplot+[mark=diamond, thick, tolcol4, mark options={tolcol4, solid}] table [x=n_sides, y=integral_timer, col sep=comma] {data/transport_assembly_times_2d/transport_assembly_2d_qfree_pruned_64_p_3.csv}; \label{plots:transport_times_2d_qfree_monomials}
                \addplot+[mark=triangle, thick, tolcol3, mark options={tolcol3, solid}] table [x=n_sides, y expr=\thisrowno{5}-\thisrowno{4}, col sep=comma] {data/transport_assembly_times_2d/transport_assembly_2d_qfree_pruned_64_p_4.csv}; \label{plots:transport_times_2d_qfree_reconstruction}
       
   				\addplot+[mark=none, black, dashed, thick, domain=3:64] {1.e-4*x/4}; 
   				\addplot+[mark=none, black, dashed, thick, domain=3:64] {1.e-6*x/8}; 
       
                \coordinate (bot) at (rel axis cs:1,0);
    \end{groupplot}
	\path (top|-current bounding box.north)--
    	coordinate(legendpos)
    	(bot|-current bounding box.north);
	\matrix[
    	matrix of nodes,
    	anchor=south,
    	draw,
    	inner sep=0.2em,
    	draw
  	]at([yshift=1ex]legendpos)
  	{
    	\ref{plots:transport_times_2d_quadrature}& Quadrature (Alg. \ref{alg:stiffness_assembly_quadrature}) &[5pt]
    	\ref{plots:transport_times_2d_qfree_full}& Quad-free (Alg. \ref{alg:stiffness_assembly_quadfree}) &[5pt] \\
    	\ref{plots:transport_times_2d_qfree_monomials}& Integration (Alg. \ref{alg:stiffness_assembly_quadfree}, line 4) &[5pt]
    	\ref{plots:transport_times_2d_qfree_reconstruction}& Reconstruction (Alg. \ref{alg:stiffness_assembly_quadfree}, lines 7-14) &[5pt] \\
        \ref{plots:transport_times_2d_On}& $O(n)$ \\};
	\end{tikzpicture}
	\caption{CPU times taken by the quadrature-based and quadrature-free-based methods to evaluate the DGFEM element transport matrix for $3\le n\le 64$ and fixed $p=4$. Left: total time taken by quadrature-based and quadrature-free-based methods. Right: Contributions to CPU time arising from Algorithm \ref{alg:stiffness_assembly_quadrature} (blue), line 4 of Algorithm \ref{alg:stiffness_assembly_quadfree} (purple) and lines 7-14 of Algorithm \ref{alg:stiffness_assembly_quadfree} (green).}
	\label{fig:transport_times_ngon_2d}
\end{figure}
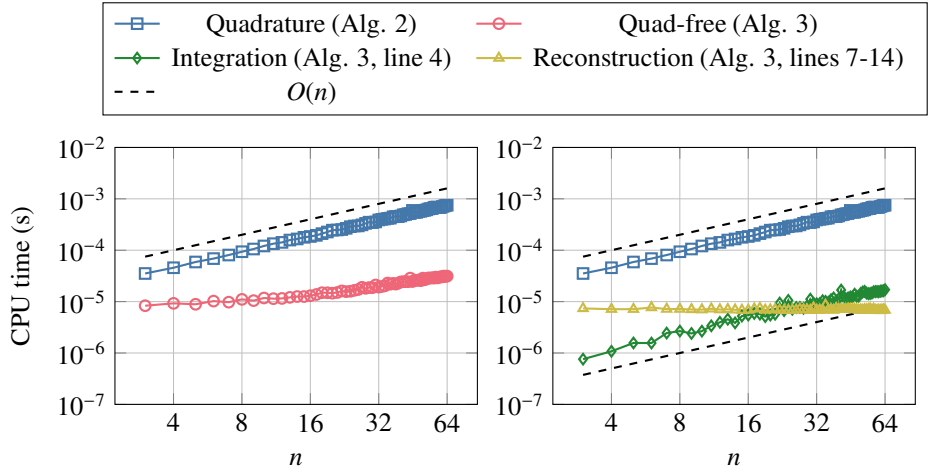

\begin{figure}[t!]
	\centering
	\begin{tikzpicture}
    \begin{groupplot}[group style={group size= 2 by 1},height=5cm,width=6.4cm]
        \nextgroupplot[xmode=log,
					   log basis x={2},
					   xticklabel={\pgfmathparse{2^(\tick)}\pgfmathprintnumber{\pgfmathresult}},
					   ymode=log,
					   log basis y={10},
					   ytick={1e-7,1e-6,1e-5,1e-4,1e-3,1e-2},
					   ymin=1e-7, ymax=1e-2,
					   xlabel=$p$, 
					   ylabel=CPU time (s), 
					   axis background/.style={fill=gray!0}, 
					   legend pos=north west,
					   grid=both,
					   grid style={line width=.1pt, draw=gray!10},
   					   major grid style={line width=.2pt,draw=gray!50}]
           
                \addplot+[mark=square, thick, tolcol1, mark options={tolcol1, solid}] table [x=pmax, y=volume_assembly_timer, col sep=comma] {data/transport_assembly_times_2d/transport_assembly_2d_quadrature_polytest.csv}; \label{plots:transport_times_2d_quadrature_polytest}
       
                \addplot+[mark=o, thick, tolcol2, mark options={tolcol2, solid}] table [x=pmax, y=volume_assembly_timer, col sep=comma] {data/transport_assembly_times_2d/transport_assembly_2d_qfree_pruned_polytest.csv}; \label{plots:transport_times_2d_qfree_full_polytest}
       
   				\addplot+[mark=none, black, dashed, thick, domain=1:12] {x^6*2e-3/2985984}; \label{plots:transport_times_2d_polytest_Op3d}
       
                \coordinate (top) at (rel axis cs:0,1);
       
        \nextgroupplot[xmode=log,
					   log basis x={2},
					   xticklabel={\pgfmathparse{2^(\tick)}\pgfmathprintnumber{\pgfmathresult}},
					   ymode=log,
					   log basis y={10},
					   ytick={1e-7,1e-6,1e-5,1e-4,1e-3,1e-2},
					   ymin=1e-7, ymax=1e-2,
					   xlabel=$p$, 
					   axis background/.style={fill=gray!0}, 
					   legend pos=north west,
					   grid=both,
					   grid style={line width=.1pt, draw=gray!10},
   					   major grid style={line width=.2pt,draw=gray!50}]
   					   minor grid style={line width=.1pt,draw=gray!10}]
           
                \addplot+[mark=square, thick, tolcol1, mark options={tolcol1, solid}] table [x=pmax, y=volume_assembly_timer, col sep=comma] {data/transport_assembly_times_2d/transport_assembly_2d_quadrature_polytest.csv}; 
       
                \addplot+[mark=o, thick, tolcol4, mark options={tolcol4, solid}] table [x=pmax, y=integral_timer, col sep=comma] {data/transport_assembly_times_2d/transport_assembly_2d_qfree_pruned_polytest.csv}; \label{plots:transport_times_2d_qfree_monomials_polytest}
                \addplot+[mark=o, thick, tolcol5, mark options={tolcol5, solid}] table [x=pmax, y expr=\thisrowno{5}-\thisrowno{4}, col sep=comma] {data/transport_assembly_times_2d/transport_assembly_2d_qfree_pruned_polytest.csv}; \label{plots:transport_times_2d_qfree_reconstruction_polytest}
       
   				\addplot+[mark=none, black, dashed, thick, domain=1:12] {x^6*2e-3/2985984}; 
   				\addplot+[mark=none, black, dash dot, thick, domain=1:12] {x^2*1e-5/144}; \label{plots:transport_times_2d_polytest_Opd}
       
                \coordinate (bot) at (rel axis cs:1,0);
    \end{groupplot}
	\path (top|-current bounding box.north)--
    	coordinate(legendpos)
    	(bot|-current bounding box.north);
	\matrix[
    	matrix of nodes,
    	anchor=south,
    	draw,
    	inner sep=0.2em,
    	draw
  	]at([yshift=1ex]legendpos)
  	{
    	\ref{plots:transport_times_2d_quadrature_polytest}& Quadrature (Alg. \ref{alg:stiffness_assembly_quadrature}) &[5pt]
    	\ref{plots:transport_times_2d_qfree_full_polytest}& Quad-free (Alg. \ref{alg:stiffness_assembly_quadfree}) &[5pt] \\
    	\ref{plots:transport_times_2d_qfree_monomials_polytest}& Integration (Alg. \ref{alg:stiffness_assembly_quadfree}, line 4) &[5pt]
    	\ref{plots:transport_times_2d_qfree_reconstruction_polytest}& Reconstruction (Alg. \ref{alg:stiffness_assembly_quadfree}, lines 7-14) &[5pt] \\
        \ref{plots:transport_times_2d_polytest_Op3d}& $O(p^{6})$ &[5pt] \ref{plots:transport_times_2d_polytest_Opd}& $O(p^2)$ \\};
	\end{tikzpicture}
	\caption{CPU times taken by the quadrature-based and quadrature-free-based methods to evaluate the DGFEM element transport matrix for fixed $n=6$ and $1\le p\le 12$. Left: total time taken by quadrature-based and quadrature-free-based methods. Right: Contributions to CPU time arising from Algorithm \ref{alg:stiffness_assembly_quadrature} (blue), line 4 of Algorithm \ref{alg:stiffness_assembly_quadfree} (purple) and lines 7-14 of Algorithm \ref{alg:stiffness_assembly_quadfree} (green).}
	\label{fig:transport_times_6gon_p_dep}
\end{figure}

The quadrature-free based method is tailored to the two-dimensional setting. The element integrals appearing in \reff{eqn:dgfem_linear_transport} are evaluated using the two-step procedure given in Algorithm \ref{alg:stiffness_assembly_quadfree}. The monomial integrals $\int_{\polytope_n} \x^{\bm{\alpha}} \d\x$ for $0\le|\bm{\alpha}|\le2p$ are computed once based on employing Algorithm \ref{alg:monomial_integration} using the first-vertex based pruning strategy as before; these are then used to assemble the local element matrix entry-wise through the decomposition \reff{eqn:element_matrix_reconstruction_formula} of the integrals given in \reff{eqn:dgfem_linear_transport}. The time taken to generate the coefficients in these decompositions is not included.

Figure \ref{fig:transport_times_ngon_2d} shows the CPU time taken by the quadrature based assembly method using Algorithm \ref{alg:stiffness_assembly_quadrature} and the quadrature-free based assembly method using Algorithms \ref{alg:stiffness_assembly_quadfree} to assemble the element matrices arising from a DGFEM discretisation of a linear, constant-coefficient transport problem in two spatial dimensions. The CPU times are averaged over 10000 calls to both assembly methods on the same $n$-gon element $\polytope_n$ for $3\le n\le 64$ on which a basis of $\mathbb{P}^4(\polytope_n)$ is employed. The time taken by Algorithm \ref{alg:stiffness_assembly_quadfree} to assemble the element matrices is further broken down into contributions from line 4 (i.e., Algorithm~\ref{alg:monomial_integration}) and lines 7-14 (i.e., the reconstruction of $(\mathbf{A}_\kappa)_{i,j}$ via \reff{eqn:element_matrix_reconstruction_formula}).

As seen in the previous example, the time-complexity of Algorithm \ref{alg:stiffness_assembly_quadrature} scales linearly with $n$, the number of sides of the element $\polytope_n$. The time-complexity associated with line 4 of Algorithm \ref{alg:stiffness_assembly_quadfree} - which we remark is a call to Algorithm \ref{alg:monomial_integration} also scales linearly with $n$, as expected.

The main body of Algorithm \ref{alg:stiffness_assembly_quadfree}, namely the nested loops on lines 7-14, is seen to scale independently of $n$. Indeed, it was seen in the analysis of the previous section that this loop exhibits no dependence on the geometry of the element. Therefore, the actual run-time of the quadrature-free based assembly method depends on whether the integration phase (line 4) or reconstruction phase (lines 7-14) is more expensive. For small values of $n$, the reconstruction phase is more expensive and so the assembly time remains roughly constant; for large enough $n$ the integration phase dominates the computational time and so a dependence of the assembly time on the geometric complexity of the element emerges.

We now consider fixing $n$ to study the dependence on $p$; to this end, in Figure \ref{fig:transport_times_6gon_p_dep} we show the scalings of Algorithms~\ref{alg:stiffness_assembly_quadrature} and~\ref{alg:stiffness_assembly_quadfree} as functions of the polynomial degree of approximation for $1\le p\le12$ on a fixed $6$-sided polygonal domain. For large enough $p$, it can be seen that the time complexities of the quadrature based and quadrature-free based assembly methods are both $\bigO(p^{3d})=\bigO(p^6)$ as predicted earlier. In the case of the quadrature-free based assembly method, the time complexities (on a given geometry) of the monomial integration phase (line 4) and the reconstruction phase (lines 7-14) of Algorithm \ref{alg:stiffness_assembly_quadfree} are $\bigO(p^{3d})=\bigO(p^6)$ and $\bigO(p^d)=\bigO(p^2)$, respectively. This is in agreement with our previous analysis, which predicted that the time complexity of the quadrature-free based assembly scales like $\bigO(\chi_1(\polytope_n)p^d + p^{3d}) = \bigO(\chi_1(\polytope_n)p^2+p^6)$, where $\chi_1(\polytope)$ denotes the measure of complexity of the geometry of a polytope $\polytope$ given in the statement of Theorem \ref{thm:monomial_integral_alg_complexity}. In this example, it is observed that the quadrature-free based assembly method is faster than the quadrature based assembly method by a factor of around an order of magnitude for all tested polynomial degrees.

\subsection{3D transport matrix assembly}

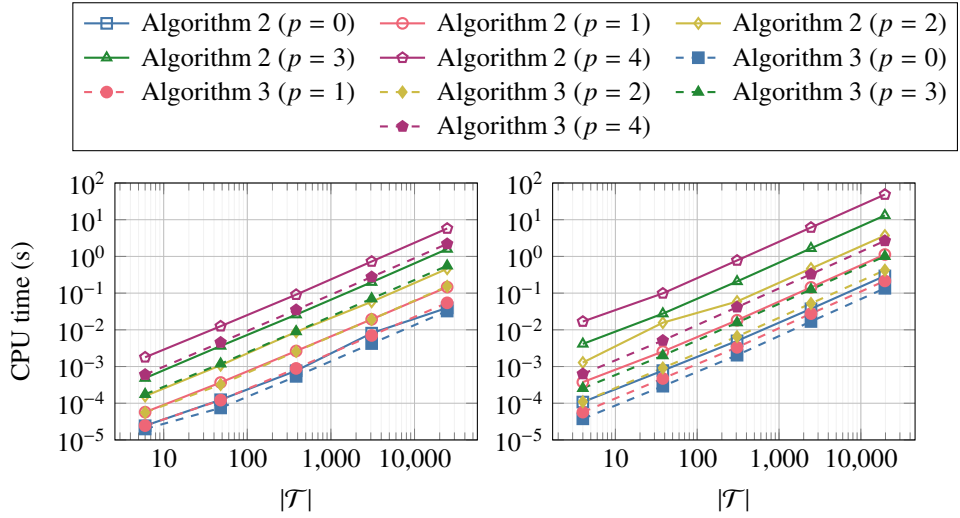
\begin{figure}[t!]
	\centering
	\begin{tikzpicture}
    \begin{groupplot}[group style={group size= 2 by 1},height=5cm,width=6.4cm]
        \nextgroupplot[xmode=log,
					   log basis x={10},
					   xticklabel={\pgfmathparse{10^(\tick)}\pgfmathprintnumber{\pgfmathresult}},
					   ymode=log,
					   log basis y={10},
					   ytick={1e-5,1e-4,1e-3,1e-2,1e-1,1e0,1e1,1e2},
					   ymin=1e-5, ymax=1e2,
					   xlabel=$|\mathcal{T}|$, 
					   ylabel=CPU time (s), 
					   axis background/.style={fill=gray!0}, 
					   legend pos=north west,
					   grid=both,
					   grid style={line width=.1pt, draw=gray!10},
   					   major grid style={line width=.2pt,draw=gray!50}]

            \addplot+[mark=square, thick, solid, tolcol1, mark options={tolcol1, solid}] table [x=no_eles, y=volume_assembly_timer, col sep=comma, restrict expr to domain={\thisrow{pmax}}{0:0}, unbounded coords=discard] {data/transport_assembly_times_3d/transport_assembly_3d_quadrature_tets.csv}; \label{plots:transport_times_3d_quadrature_p_0}
            
            \addplot+[mark=o, thick, solid, tolcol2, mark options={tolcol2, solid}] table [x=no_eles, y=volume_assembly_timer, col sep=comma, restrict expr to domain={\thisrow{pmax}}{1:1}, unbounded coords=discard] {data/transport_assembly_times_3d/transport_assembly_3d_quadrature_tets.csv}; \label{plots:transport_times_3d_quadrature_p_1}
            
            \addplot+[mark=diamond, thick, solid, tolcol3, mark options={tolcol3, solid}] table [x=no_eles, y=volume_assembly_timer, col sep=comma, restrict expr to domain={\thisrow{pmax}}{2:2}, unbounded coords=discard] {data/transport_assembly_times_3d/transport_assembly_3d_quadrature_tets.csv}; \label{plots:transport_times_3d_quadrature_p_2}
            
            \addplot+[mark=triangle, thick, solid, tolcol4, mark options={tolcol4, solid}] table [x=no_eles, y=volume_assembly_timer, col sep=comma, restrict expr to domain={\thisrow{pmax}}{3:3}, unbounded coords=discard] {data/transport_assembly_times_3d/transport_assembly_3d_quadrature_tets.csv}; \label{plots:transport_times_3d_quadrature_p_3}
            
            \addplot+[mark=pentagon, thick, solid, tolcol5, mark options={tolcol5, solid}] table [x=no_eles, y=volume_assembly_timer, col sep=comma, restrict expr to domain={\thisrow{pmax}}{4:4}, unbounded coords=discard] {data/transport_assembly_times_3d/transport_assembly_3d_quadrature_tets.csv}; \label{plots:transport_times_3d_quadrature_p_4}

            \addplot+[mark=square*, thick, dashed, tolcol1, mark options={tolcol1, solid}] table [x=no_eles, y=volume_assembly_timer, col sep=comma, restrict expr to domain={\thisrow{pmax}}{0:0}, unbounded coords=discard] {data/transport_assembly_times_3d/transport_assembly_3d_qfree_pruned_tets.csv}; \label{plots:transport_times_3d_qfree_p_0}
            
            \addplot+[mark=*, thick, dashed, tolcol2, mark options={tolcol2, solid}] table [x=no_eles, y=volume_assembly_timer, col sep=comma, restrict expr to domain={\thisrow{pmax}}{1:1}, unbounded coords=discard] {data/transport_assembly_times_3d/transport_assembly_3d_qfree_pruned_tets.csv}; \label{plots:transport_times_3d_qfree_p_1}
            
            \addplot+[mark=diamond*, thick, dashed, tolcol3, mark options={tolcol3, solid}] table [x=no_eles, y=volume_assembly_timer, col sep=comma, restrict expr to domain={\thisrow{pmax}}{2:2}, unbounded coords=discard] {data/transport_assembly_times_3d/transport_assembly_3d_qfree_pruned_tets.csv}; \label{plots:transport_times_3d_qfree_p_2}
            
            \addplot+[mark=triangle*, thick, dashed, tolcol4, mark options={tolcol4, solid}] table [x=no_eles, y=volume_assembly_timer, col sep=comma, restrict expr to domain={\thisrow{pmax}}{3:3}, unbounded coords=discard] {data/transport_assembly_times_3d/transport_assembly_3d_qfree_pruned_tets.csv}; \label{plots:transport_times_3d_qfree_p_3}
            
            \addplot+[mark=pentagon*, thick, dashed, tolcol5, mark options={tolcol5, solid}] table [x=no_eles, y=volume_assembly_timer, col sep=comma, restrict expr to domain={\thisrow{pmax}}{4:4}, unbounded coords=discard] {data/transport_assembly_times_3d/transport_assembly_3d_qfree_pruned_tets.csv}; \label{plots:transport_times_3d_qfree_p_4}
            
            \coordinate (top) at (rel axis cs:0,1);

        \nextgroupplot[xmode=log,
					   log basis x={10},
					   xticklabel={\pgfmathparse{10^(\tick)}\pgfmathprintnumber{\pgfmathresult}},
					   ymode=log,
					   log basis y={10},
					   ytick={1e-5,1e-4,1e-3,1e-2,1e-1,1e0,1e1,1e2},
					   ymin=1e-5, ymax=1e2,
					   xlabel=$|\mathcal{T}|$, 
					   axis background/.style={fill=gray!0}, 
					   legend pos=north west,
					   grid=both,
					   grid style={line width=.1pt, draw=gray!10},
   					   major grid style={line width=.2pt,draw=gray!50}]

            \addplot+[mark=square, thick, solid, tolcol1, mark options={tolcol1, solid}] table [x=no_eles, y=volume_assembly_timer, col sep=comma, restrict expr to domain={\thisrow{pmax}}{0:0}, unbounded coords=discard] {data/transport_assembly_times_3d/transport_assembly_3d_quadrature_agglomtets.csv}; 
            
            \addplot+[mark=o, thick, solid, tolcol2, mark options={tolcol2, solid}] table [x=no_eles, y=volume_assembly_timer, col sep=comma, restrict expr to domain={\thisrow{pmax}}{1:1}, unbounded coords=discard] {data/transport_assembly_times_3d/transport_assembly_3d_quadrature_agglomtets.csv}; 
            
            \addplot+[mark=diamond, thick, solid, tolcol3, mark options={tolcol3, solid}] table [x=no_eles, y=volume_assembly_timer, col sep=comma, restrict expr to domain={\thisrow{pmax}}{2:2}, unbounded coords=discard] {data/transport_assembly_times_3d/transport_assembly_3d_quadrature_agglomtets.csv}; 
            
            \addplot+[mark=triangle, thick, solid, tolcol4, mark options={tolcol4, solid}] table [x=no_eles, y=volume_assembly_timer, col sep=comma, restrict expr to domain={\thisrow{pmax}}{3:3}, unbounded coords=discard] {data/transport_assembly_times_3d/transport_assembly_3d_quadrature_agglomtets.csv}; 
            
            \addplot+[mark=pentagon, thick, solid, tolcol5, mark options={tolcol5, solid}] table [x=no_eles, y=volume_assembly_timer, col sep=comma, restrict expr to domain={\thisrow{pmax}}{4:4}, unbounded coords=discard] {data/transport_assembly_times_3d/transport_assembly_3d_quadrature_agglomtets.csv}; 

            \addplot+[mark=square*, thick, dashed, tolcol1, mark options={tolcol1, solid}] table [x=no_eles, y=volume_assembly_timer, col sep=comma, restrict expr to domain={\thisrow{pmax}}{0:0}, unbounded coords=discard] {data/transport_assembly_times_3d/transport_assembly_3d_qfree_pruned_agglomtets.csv}; 
            
            \addplot+[mark=*, thick, dashed, tolcol2, mark options={tolcol2, solid}] table [x=no_eles, y=volume_assembly_timer, col sep=comma, restrict expr to domain={\thisrow{pmax}}{1:1}, unbounded coords=discard] {data/transport_assembly_times_3d/transport_assembly_3d_qfree_pruned_agglomtets.csv}; 
            
            \addplot+[mark=diamond*, thick, dashed, tolcol3, mark options={tolcol3, solid}] table [x=no_eles, y=volume_assembly_timer, col sep=comma, restrict expr to domain={\thisrow{pmax}}{2:2}, unbounded coords=discard] {data/transport_assembly_times_3d/transport_assembly_3d_qfree_pruned_agglomtets.csv}; 
            
            \addplot+[mark=triangle*, thick, dashed, tolcol4, mark options={tolcol4, solid}] table [x=no_eles, y=volume_assembly_timer, col sep=comma, restrict expr to domain={\thisrow{pmax}}{3:3}, unbounded coords=discard] {data/transport_assembly_times_3d/transport_assembly_3d_qfree_pruned_agglomtets.csv}; 
            
            \addplot+[mark=pentagon*, thick, dashed, tolcol5, mark options={tolcol5, solid}] table [x=no_eles, y=volume_assembly_timer, col sep=comma, restrict expr to domain={\thisrow{pmax}}{4:4}, unbounded coords=discard] {data/transport_assembly_times_3d/transport_assembly_3d_qfree_pruned_agglomtets.csv}; 
            
            \coordinate (bot) at (rel axis cs:1,0);

    \end{groupplot}

	\path (top|-current bounding box.north)--
    	coordinate(legendpos)
    	(bot|-current bounding box.north);
	\matrix[
    	matrix of nodes,
    	anchor=south,
    	draw,
    	inner sep=0.2em,
    	draw
  	]at([yshift=1ex]legendpos)
  	{
        \ref{plots:transport_times_3d_quadrature_p_0} & Algorithm \ref{alg:stiffness_assembly_quadrature} ($p=0$) &[5pt] 
        \ref{plots:transport_times_3d_quadrature_p_1} & Algorithm \ref{alg:stiffness_assembly_quadrature} ($p=1$) &[5pt]
        \ref{plots:transport_times_3d_quadrature_p_2} & Algorithm \ref{alg:stiffness_assembly_quadrature} ($p=2$) \\ 
        \ref{plots:transport_times_3d_quadrature_p_3} & Algorithm \ref{alg:stiffness_assembly_quadrature} ($p=3$) &[5pt]
        \ref{plots:transport_times_3d_quadrature_p_4} & Algorithm \ref{alg:stiffness_assembly_quadrature} ($p=4$) &[5pt]
        \ref{plots:transport_times_3d_qfree_p_0} & Algorithm \ref{alg:stiffness_assembly_quadfree} ($p=0$) \\ 
        \ref{plots:transport_times_3d_qfree_p_1} & Algorithm \ref{alg:stiffness_assembly_quadfree} ($p=1$) &[5pt]
        \ref{plots:transport_times_3d_qfree_p_2} & Algorithm \ref{alg:stiffness_assembly_quadfree} ($p=2$) &[5pt]
        \ref{plots:transport_times_3d_qfree_p_3} & Algorithm \ref{alg:stiffness_assembly_quadfree} ($p=3$) \\ 
        & & [5pt] \ref{plots:transport_times_3d_qfree_p_4} & Algorithm \ref{alg:stiffness_assembly_quadfree} ($p=4$) &[5pt] \\
    };
        
	\end{tikzpicture}
	\caption{CPU times taken by Algorithm \ref{alg:stiffness_assembly_quadrature} (quadrature-based assembly) and Algorithm \ref{alg:stiffness_assembly_quadfree} (quadrature-free-based assembly) to evaluate the DGFEM element transport matrices on a sequence of meshes and for $0\le p\le4$. Left: total time taken on a sequence of tetrahedral meshes with $|\mathcal{T}|\in\{6,48,384,3072,24576\}$. Right: total time taken on a sequence of agglomerated meshes with $|\mathcal{T}|\in\{4,38,307,2457,19660\}$.}
    \label{fig:transport_times_3d_quad_vx_qfree}
\end{figure}

As a final example, we will compare the quadrature based and quadrature-free based assembly methods for the system matrix arising from DGFEM discretisation of the linear first-order transport problem posed in three spatial dimensions. We will consider sequences of tetrahedral and agglomerated tetrahedral meshes $\mathcal{T}$ and employ local polynomial bases $\mathbb{P}^p(\kappa)$ for each $\kappa\in\mathcal{T}$ with $0\le p\le 4$.

For standard tetrahedral meshes, the quadrature based method employs quadrature schemes consisting of $(p+2)^3$ points and weights on each tetrahedron; this ensures that the quadrature scheme exactly evaluates the element integrals appearing in \reff{eqn:dgfem_linear_transport}. The agglomerated tetrahedral meshes are formed by partitioning a fine mesh $\mathcal{T}_{fine}$ into polyhedral coarse-mesh elements $\kappa\in\mathcal{T}$ using METIS \cite{karypis1997metis}. The agglomeration strategy is chosen such that each coarse-mesh element $\kappa$ is formed from an average of 10 fine-mesh elements $\kappa_{fine}\in\mathcal{T}_{fine}$. The quadrature schemes on elements in $\mathcal{T}_{fine}$ are inherited by the coarse-mesh elements, ensuring that the integrals appearing in \reff{eqn:dgfem_linear_transport} can be evaluated exactly.

The quadrature-free based method is performed in two steps as before. On each element $\kappa\in\mathcal{T}$, the monomial integrals $\int_\kappa \x^{\bm{\alpha}} \d\x$ for $0\le|\bm{\alpha}|\le2p$ are computed once using an implementation of Algorithm~\ref{alg:monomial_integration} tailored to the three-dimensional setting. As before, we employ a first-vertex based pruning strategy to reduce the CPU time spent in Algorithm \ref{alg:monomial_integration}. The integrals in \reff{eqn:dgfem_linear_transport} are then evaluated using known decompositions of the integrands in terms of the monomial basis. The time taken to generate the coefficients in these decompositions is not included.

\input{includes/transport_times_3d/transport_assembly_3d_qfree_breakdown}

Figure \ref{fig:transport_times_3d_quad_vx_qfree} shows the CPU time taken by the quadrature based and quadrature-free based methods (Algorithms \ref{alg:stiffness_assembly_quadrature} and \ref{alg:stiffness_assembly_quadfree}, respectively) to assemble the global transport matrices arising from a DGFEM discretisation of a linear, constant-coefficient transport problem in three spatial dimensions. Both methods are tested on standard and agglomerated tetrahedral meshes for global polynomial degrees $0\le p\le 4$.

Both assembly methods are seen to scale linearly with the number of elements in the spatial mesh, as expected. For all tests recorded, the CPU time taken to assemble the system matrix using the quadrature-free method is consistently faster than the standard quadrature based approach by at most a constant multiplicative factor. For the tests performed on standard tetrahedral meshes, this multiplicative constant is between 2 and 3 for $p\ge1$. For agglomerated tetrahedral meshes, this multiplicative constant improves to at least 5 for $p\ge1$, with the quadrature based assembly taking almost 20 times longer than the quadrature-free based assembly in the case $p=4$.
This improvement in assembly time due to switching to a quadrature-free approach is expected to be greater on agglomerated meshes than tetrahedral meshes. Indeed, on a given element $\kappa\in\mathcal{T}$, the time complexity of the quadrature based method is $\bigO(n_{\kappa} p^{3d})=\bigO(n_{\kappa} p^{9})$, where $n_{\kappa}$ denotes the number of fine-mesh elements in $\mathcal{T}_{fine}$ that comprise $\kappa$. In contrast, the time complexity of the quadrature based method is $\bigO(\chi_1(\kappa)p^d + p^{3d})=\bigO(\chi_1(\kappa)p^3+p^9)$.

Figures \ref{fig:transport_assembly_3d_qfree_breakdown} and \ref{fig:transport_assembly_3d_qfree_breakdown_p_dep} present the breakdown of the contribution of line 4 and lines 7-14 of Algorithm~\ref{alg:stiffness_assembly_quadfree} to the total CPU time taken by the quadrature-free based algorithm. Both the integration and reconstruction phases are seen to scale linearly with the number of elements in the mesh; this is to be expected since lines 4 and lines 7-14 of Algorithm \ref{alg:stiffness_assembly_quadfree} are executed once for each element. The reconstruction \reff{eqn:element_matrix_reconstruction_formula} performed on lines 7-14 of Algorithm \ref{alg:stiffness_assembly_quadfree} is seen to scale much faster as a function of $p$ than the integration of the monomial basis via Algorithm \ref{alg:monomial_integration}; this is evidenced by the greater separation of the data corresponding to the CPU times for each polynomial degree $p$. However, the time taken in the reconstruction phase is seen to be independent of the geometry of the mesh elements, whereas a greater amount of CPU time is required to perform the integration phase on agglomerated tetrahedral meshes.

\iffalse
\subsubsection{Effect of pruning in Algorithm \ref{alg:monomial_integration}}

\begin{table}[t!]
    \centering
    \begin{tabular}{cc|ccccc}
        \multicolumn{2}{c|}{$|\mathcal{T}|$} & 6 & 48 & 384 & 3072 & 24576 \\ \hline
        \multirow{3}{*}{$p=0$} & Unpruned & 2.4926E-05 & 1.0816E-04 & 8.1476E-04 & 6.8287E-03 & 4.8521E-02 \\
        & Pruned & 1.9337E-05 & 7.1661E-05 & 5.1822E-04 & 4.0497E-03 & 3.1205E-02 \\ 
        & Ratio & 0.7758 & 0.6625 & 0.6360 & 0.5930 & 0.6431 \\ \hline
        \multirow{3}{*}{$p=1$} & Unpruned & 3.5647E-05 & 2.0676E-04 & 1.5616E-03 & 1.3476E-02 & 9.5595E-02 \\
        & Pruned & 2.1014E-05 & 9.8906E-05 & 7.1403E-04 & 5.6071E-03 & 4.3727E-02 \\
        & Ratio & 0.5895 & 0.4784 & 0.4572 & 0.4161 & 0.4574 \\ \hline
        \multirow{3}{*}{$p=2$} & Unpruned & 6.3102E-05 & 4.2202E-04 & 3.2991E-03 & 2.5008E-02 & 1.9898E-01 \\
        & Pruned & 3.1368E-05 & 1.5550E-04 & 1.2420E-03 & 9.1364E-03 & 7.2012E-02 \\
        & Ratio & 0.4971 & 0.3685 & 0.3765 & 0.3653 & 0.3619 \\ \hline
        \multirow{3}{*}{$p=3$} & Unpruned & 1.1175E-04 & 8.2232E-04 & 5.9819E-03 & 4.7363E-02 & 3.7750E-01 \\
        & Pruned & 4.4362E-05 & 2.5484E-04 & 1.8026E-03 & 1.3952E-02 & 1.1175E-01 \\
        & Ratio & 0.3970 & 0.3099 & 0.3013 & 0.2946 & 0.2960 \\ \hline
        \multirow{3}{*}{$p=4$} & Unpruned & 1.8889E-04 & 1.4290E-03 & 1.0750E-02 & 8.6211E-02 & 6.8734E-01 \\
        & Pruned & 6.2170E-05 & 2.8795E-04 & 3.0248E-03 & 2.3477E-02 & 1.8765E-01 \\
        & Ratio & 0.3291 & 0.2015 & 0.2814 & 0.2723 & 0.2730 \\
    \end{tabular}
    \caption{CPU times of unpruned and pruned versions of Algorithm \ref{alg:monomial_integration} (executed on line 4 of Algorithm \ref{alg:stiffness_assembly_quadfree}) for the assembly of the element matrices $\mathbf{A}_\kappa$ for tetrahedral elements.}
    \label{tab:transport_assembly_monom_int_pruning_tets}
\end{table}

\begin{table}[t!]
    \centering
    \begin{tabular}{cc|ccccc}
        \multicolumn{2}{c|}{$|\mathcal{T}|$} & 4 & 38 & 307 & 2457 & 19660 \\ \hline
        \multirow{3}{*}{$p=0$} & Unpruned & 4.5272E-05 & 3.5574E-04 & 2.6056E-03 & 2.1919E-02 & 1.7550E-01 \\
        & Pruned & 3.7088E-05 & 2.8895E-04 & 2.0201E-03 & 1.6717E-02 & 1.3413E-01 \\ 
        & Ratio & 0.8192 & 0.8123 & 0.7753 & 0.7627 & 0.7643 \\ \hline
        \multirow{3}{*}{$p=1$} & Unpruned & 8.7692E-05 & 7.7001E-04 & 5.5442E-03 & 4.3656E-02 & 3.5131E-01 \\
        & Pruned & 5.4265E-05 & 4.5130E-04 & 3.1994E-03 & 2.6491E-02 & 2.0647E-01 \\ 
        & Ratio & 0.6188 & 0.5861 & 0.5771 & 0.6068 & 0.5877 \\ \hline
        \multirow{3}{*}{$p=2$} & Unpruned & 1.9519E-04 & 1.5893E-03 & 1.1656E-02 & 9.1985E-02 & 7.4515E-01 \\
        & Pruned & 9.1346E-05 & 7.6676E-04 & 5.5029E-03 & 4.4103E-02 & 3.5562E-01 \\ 
        & Ratio & 0.4680 & 0.4825 & 0.4721 & 0.4795 & 0.4772 \\ \hline
        \multirow{3}{*}{$p=3$} & Unpruned & 3.4222E-04 & 3.0026E-03 & 2.3204E-02 & 1.8303E-01 & 1.4867E+00 \\
        & Pruned & 1.6222E-04 & 1.2524E-03 & 9.9395E-03 & 7.9390E-02 & 6.4008E-01 \\ 
        & Ratio & 0.4740 & 0.4171 & 0.4284 & 0.4338 & 0.4305 \\ \hline
        \multirow{3}{*}{$p=4$} & Unpruned & 6.0944E-04 & 5.2648E-03 & 4.1087E-02 & 3.2613E-01 & 2.6380E+00 \\
        & Pruned & 2.4756E-04 & 1.9114E-03 & 1.5453E-02 & 1.2295E-01 & 1.0089E+00 \\ 
        & Ratio & 0.4062 & 0.3631 & 0.3761 & 0.3770 & 0.3824 \\
    \end{tabular}
    \caption{CPU times of unpruned and pruned versions of Algorithm \ref{alg:monomial_integration} (executed on line 4 of Algorithm \ref{alg:stiffness_assembly_quadfree}) for the assembly of the element matrices $\mathbf{A}_\kappa$ for agglomerated tetrahedral elements.}
    \label{tab:transport_assembly_monom_int_pruning_agglomtets}
\end{table}

We also study the effect of implementing Algorithm \ref{alg:monomial_integration} with and without pruning as described in Section \ref{section:monomial_integration_quad_free_analysis_subsec}. Tables \ref{tab:transport_assembly_monom_int_pruning_tets} and \ref{tab:transport_assembly_monom_int_pruning_agglomtets} show the total CPU times taken by the unpruned and pruned versions of Algorithm \ref{alg:monomial_integration} applied to tetrahedral and agglomerated tetrahedral meshes respectively. An additional quantity, computed as the ratio of the CPU time taken by the pruned algorithm against the unpruned algorithm, is also reported; values of this ratio less than 1 indicate that Algorithm \ref{alg:monomial_integration} with pruning computes the integral set $\ifun(\polytope,\mathcal{J})$ faster than the same algorithm without pruning.

For each $0\le p\le 4$, it is observed that the ratio of CPU time taken by the pruned algorithm against the unpruned algorithm remains roughly constant. For a fixed number of elements, this ratio decreases as $p$ increases. It is expected that this ratio continues to decrease as $p\rightarrow\infty$ but remains bounded from below by a constant - for a single element $\kappa$, this constant is expected to depend on the number of nodes and edges of the graph $G(\kappa)$ defined in Section \ref{section:monomial_integration_quad_free_analysis_subsec} before and after pruning. While pruning accelerates the assembly of the element matrix contributions $\mathbf{A}_\kappa$ for both tetrahedral and agglomerated tetrahedral elements, a greater improvement in assembly time is observed for tetrahedral meshes.

\fi

\section{Conclusions} \label{sec:conclusions}

In this article we have analysed the computational complexity of computing the integral of families of polynomial spaces over general polytopic domains.
Starting from the ideas developed in \cite{chin2015numerical,lasserre1998integration} for the integration of homogeneous functions, we have demonstrated that the time and space complexities required to integrate families of monomial functions are dependent on three factors: the ambient dimension of the polytopic domain; the size of the requested set of monomial integrals; and the size of a directed graph related to the polytopic domain. In the case of polygonal or polyhedral geometries, the monomial integration algorithm is shown to scale linearly with the number of graph edges. This algorithm was applied to the computation of element integrals arising in the DGFEM discretisation of the linear transport problem. We have shown that, by decomposing the integrand into a linear combination of monomial functions, these integrals could be evaluated at speeds comparable to methods based on employing quadrature schemes with a minimal number of points and weights. In comparison to quadrature based methods employing a sub-tessellation, the quadrature-free approach is seen to both significantly accelerate the assembly of the system matrix and also scale independently of the element geometry for sufficiently-high polynomial degrees.

\section*{Acknowledgments}
PH and MEH acknowledge the financial support of the EPSRC (grant EP/R030707/1). PH also acknowledges the financial support of the MRC (grant MR/T017988/1).

\bibliographystyle{acm}
\bibliography{references}

\end{document}